\newtheorem{theorem}{Theorem}[section]
\newtheorem{remark}{Remark}[section]
\newtheorem{lemma}{Lemma}[section]
\newtheorem{definition}{Definition}[section]
\newtheorem{proposition}{Proposition}[section]
\newtheorem{example}{Example}[section]
\begin{document}

\title{THURSTON'S OPERATIONS ON THE BRAID GROUPS}

\author{VIKTOR  LOPATKIN  \footnote{wickktor@gmail.com}}

\maketitle
\abstract{In this paper we study Thurston's automaton on the braid groups via binary operations. These binary operations are obtained from the construction of this automaton. We study these operations and find some connections between them in a ``skew lattice'' spirit.}

\section*{Introduction}
The aim of this paper is to investigate the working of Thurston's automaton. Thurston constructed a finite state automaton \cite[Chapter 9]{EpThur}, having as the set of states the positive non-repeating braids, i.e., any two of its strands cross at most once. The concept of non-repeating braids is very useful, because the total (algebraic) number of crossings of two given strands in a braid is clearly an invariant of isotopy. Since a positive braid has only positive crossing, the absolute number of crossings of two strands in a positive braid is an invariant of isotopy. This idea is very useful, because we can forget about isotopic equivalence and, moreover, there is a bijection between the set of non-repeating braids and permutations.

\par An interesting characteristic of Thurston's automaton is that, after a word is imputed, the state is the maximal tail of the word that lies in the set of non-repeating braids. This automaton allowed proving that the braid group is automatic. Moreover, this automaton rewrites any word into a canonical form which is called the (left or right) greedy normal form.

\par In \cite{Deh1} an original point of view was presented on the working of Thurston's automaton. Namely, there was introduced a concept of  ``derivation''  $\partial w$ of a positive braid $w$. This concept can be described as follows. Usually, we consider a braid $w$ as a three--dimensional figure viewed from the top; the derivation of the braid is the same figure but it viewed from the side. Thus we obtain a new braid which is called the derivative $\partial w$ of $w$. As an application, a normal form was deduced for the positive braid words, which coincided with the ``right greedy normal form''  \cite{Elr}, \cite{EpThur}.

\par As is well known, F. Garside \cite{Gar} solved the Conjugacy Problem for the braid group $\mathbf{Br}_n$ by introducing a submonoid $\mathbf{Br}^+_n$ and a distinguished element $\Delta_n$ of $\mathbf{Br}^+_n$ (however, we denote this element by $\Omega_n$ \cite{EpThur}, because it corresponds to the last (in some sense) permutation $\omega$ which sends $\{1,2, \ldots, n\}$ to $\{n,n-1,\ldots, 1\}$) that he call fundamental, and showing that every element of $\mathbf{Br}_n$ can be expressed as a fraction of the form $\Omega_n^m w$, with $m$ being an integer and $w \in \mathbf{Br}_n^+$.

\par Although F. Garside was very close to such a decomposition when he proved that the greatest common divisors exist in $\mathbf{Br}_n^+$, the result did not appear in his work explicitly, and it seems that the first instances of such distinguished decompositions, or normal forms, go back to the 1980's, to the independent works by S. Adjan \cite{Adjan}, M. El Rifai and H. Morton \cite{Elr},
and W. Thurston (circulated notes \cite{Thur}, later appearing as Chapter IX of the book \cite{EpThur} by D. Epstein et al.). The normal form was soon used to improve Garside's solution of the Conjugacy Problem \cite{Elr} and, extended from the monoid to the group, to serve as
a paradigmatic example in the then emerging theory of automatic groups due to J. Cannon,
W. Thurston, and others. Sometimes called the {\it greedy normal form} or {\it Garside normal
form}, or {\it Thurston normal form}, it became a standard tool in the investigation of braids
and Artin --- Tits monoids and groups from a viewpoint of geometric group theory and of theory of representations, essential, in particular, in D. Krammer's algebraic proof of the linearity of the braid groups \cite{161} and \cite{162}.

\par In this paper we study the Thurston's automaton via new binary operations $\asymp$ and $\bowtie$. These operations are resulted from the construction of this automaton, that is, the Thurston automaton works in the following way. Suppose we have two non-repeating braids $a$ and $b$, and we want to rewrite the word $ab$. The braid $b$ looks for a new crossing of the braid $a$, and if the braid $a$ allows to take this crossing (i.e., if there is a presentation $a = a'a''$ such that $a''$ is a braid which exactly contains the needed crossing for the braid $b$), then the braid $b$ takes this crossing. So, the operation ``give the needed crossing'' from the braid $a$ to the braid $b$ will be denoted as $a \asymp b$ and the operation ``take the needed crossing'' from the braid $a$ to the braid $b$ will be denoted as  $a\bowtie b$ (see fig.\ref{figCD1}). Roughly speaking, the braid $b$ is hungry and greedy for new crossing every time.

\par We will research these operations via combinatorial way, that is, we will find some very interesting relations between them (see Theorem \ref{GSB}). These relations have a ``skew lattice'' spirit. As a corollary to these relations, we will describe a Gr\"obner --- Shirshov basis for the braid groups and also we present the greedy normal form via these operations (see Theorem\ref{cononform}).

\par Gr\"obner bases and Gr\"obner~--- Shirshov bases were invented independently by A.I. Shirshov for the ideals of free
(commutative, anti-commutative) non-associative algebras \cite{Sh1,Sh2} and free Lie algebras \cite{ShE, Sh2}, by H. Hironaka \cite{Hironaka} for the ideals of the power series algebras (both formal and convergent), and by B. Buchberger \cite{Buchberger} for the ideals of the polynomial algebras.

\section{Braid groups in Thurston's generators}

A {\it braid} is obtained by laying down of parallel pieces of strings and intertwining them, without losing track of the fact that they run essentially in the same direction. If we lay down two braids $B$ and $B'$ in a column, so the end of $B$ matches the beginning of $B'$ strand by strand, we get another braid $BB'$; this operation defines a product in the set of all $n$-strands braids, for a fixed $n>1$. We consider two braids to be equivalent if there is an isotopy between them. The set $\mathbf{Br} = \mathbf{Br}_n$ of isotopy classes of $n$-strand braids has a group structure, because if we concatenate a braid with its mirror image in a horizontal plane, the result is isotopic to the trivial braid (the one with no crossings). We call $\mathbf{Br}_n$ the {\it $n$-strand braid group.}

\smallskip

\par We will use as generators for $\mathbf{Br}_n$ the set of {\it positive crossings}, that is, the crossings between two (necessary adjacent) strands, with the front strand having a positive slope. We denote these generators by $\sigma_1, \ldots, \sigma_{n-1}$. These generators are subject to the following relations:
$$
\begin{cases}
\sigma_i\sigma_j = \sigma_j\sigma_i, \mbox{ if } |i-j| >1,\\
\sigma_i\sigma_{i+1}\sigma_i = \sigma_{i+1}\sigma_i\sigma_{i+1}.
\end{cases}
$$

\smallskip

\begin{remark}[{\bf WARNING!}]\label{warning}
Thurston considered a braid from right to left, i.e., the braid starts on the right, and the crossings get added as we move left. Also, he numbered strands at each horizontal position from the top down. It means that for any two braids $a$ and $b$ the notation $ab$ means that we starts from $b$! In this paper, we will use standard notations, i.e., the product of $ab$ starts from $a$. We will also think of the braids as placed in the vertical direction, and we numerate strands at each vertical position from the left to right. Unless otherwise stated, we will assume that for a fixed braid all its strands are numerated with respect to the top line. And finally, the notation $BB'$ for two braids $B,B' \in \mathbf{Br}$ means that the braid $B$ is above the braid $B'$, i.e, the crossings get added as we move down.

\smallskip

\par {It follows that we should invert all Thurston's formulas!}
\end{remark}

\smallskip

\par One obvious invariant of an isotopy of a braid is the permutation it induces on the order of the strands: given a braid $B$, the strands define a map $p(B)$ from the top set of endpoints to the bottom set of endpoints, which we interpret as a permutation of $\{1, \ldots, n\}$. In this way we get a homomorphism $p:\mathbf{Br}_n \to \mathbb{S}_n$, where $\mathbb{S}_n$ is the symmetric group. The generator $\sigma_i$ is mapped to the transposition $s_i = (i,i+1)$. We denote by $S_n = \{s_1, \ldots, s_{n-1}\}$ the set of generators for the symmetric group $\mathbb{S}_n$.

\par Now we want to define an inverse map $p^{-1}:\mathbb{S}_n \to \mathbf{Br}_n$.  To this end, we need the following definition \cite[p.183]{EpThur}

\smallskip

\begin{definition}\label{conR}
Let $S = \{s_1, \ldots, s_{n-1}\}$ be the set of generators for $\mathbb{S}_n$. Each permutation $\pi$ gives rise to a total order relation $\le_\pi$ on $\{1, \ldots,n\}$  with $i \le_\pi j$ if $\pi(i) < \pi(j)$. We set
$$
R_\pi: = \{(i,j) \in \{1, \ldots, n\} \times \{1,\ldots,n\}|i<j, \, \pi(i) > \pi(j)\}.
$$
\end{definition}

\smallskip

\par The construction gives rise to the following formulas:
\begin{equation}\label{Thurstonformulas1}
R_e = \varnothing, \quad R_{\pi^{-1}} = \pi R_\pi, \quad R_{\pi_1\pi_2} = (\pi_1^{-1}R_{\pi_2})\bigtriangleup R_{\pi_1},
\end{equation}
where $\bigtriangleup$ denotes symmetric difference and the image of a pair under permutation is defined by taking the image of each component and reordering, if necessary, so that the smaller number comes first.

\smallskip

\par Unfortunately, W. Thurston did not prove these formulas, and, since they are important for us, we should prove them.

\smallskip

\begin{lemma}[Thurston's formulas]
For any three permutations $\pi_1$, $\pi_2$ and $\pi$ with $\pi = \pi_1\pi_2$, we have
\[
R_{\pi_1\pi_2} = (\pi_1^{-1}R_{\pi_2})\bigtriangleup R_{\pi_1}, \quad  R_{\pi^{-1}} = \pi R_\pi, \quad R_\varepsilon = \varnothing,
\]
where $\varepsilon$ stands for the identity permutation,  $\bigtriangleup$ denotes symmetric difference and the image of a pair under permutation is defined by taking the image of each component and reordering, if necessary, so the smaller number comes first.
\end{lemma}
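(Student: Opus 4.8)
The plan is to dispatch the three identities in order of increasing difficulty, the real content being the product formula, from which the inversion formula then drops out as a special case. The identity $R_\varepsilon = \varnothing$ is immediate from Definition \ref{conR}: the identity permutation preserves order, so no pair $i<j$ can satisfy $\varepsilon(i)>\varepsilon(j)$.

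For the product formula $R_{\pi_1\pi_2} = (\pi_1^{-1}R_{\pi_2})\bigtriangleup R_{\pi_1}$ I would check membership pair by pair. Fix $i<j$ and set $i'=\pi_1(i)$, $j'=\pi_1(j)$. Two preliminary observations: $(i,j)\in R_{\pi_1}$ exactly when $i'>j'$, and $(i,j)\in\pi_1^{-1}R_{\pi_2}$ exactly when the two-element set $\{i',j'\}$, written in increasing order, belongs to $R_{\pi_2}$ (this uses only that applying $\pi_1$ componentwise and re-sorting is a bijection on two-element subsets, inverse to the same operation performed with $\pi_1^{-1}$). Now split according to the sign of $i'-j'$. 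If $i'<j'$, then $(i,j)\notin R_{\pi_1}$, and unwinding the definition of $R_{\pi_1\pi_2}$ (with the composition convention of Remark \ref{warning}, so that the ``upper'' permutation $\pi_1$ acts first) gives $(i,j)\in R_{\pi_1\pi_2}\iff \pi_2(i')>\pi_2(j')\iff (i,j)\in\pi_1^{-1}R_{\pi_2}$, which is exactly the condition for $(i,j)$ to lie in the symmetric difference when it is not in $R_{\pi_1}$. If $i'>j'$, then $(i,j)\in R_{\pi_1}$, the increasing listing of $\{i',j'\}$ is $(j',i')$, and one finds $(i,j)\in R_{\pi_1\pi_2}\iff \pi_2(i')>\pi_2(j')\iff (j',i')\notin R_{\pi_2}\iff (i,j)\notin\pi_1^{-1}R_{\pi_2}$; since $(i,j)$ now lies in $R_{\pi_1}$, this is again precisely the membership condition for the symmetric difference. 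Combining the two cases yields the formula.

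The inversion formula then follows by specialization: put $\pi_1=\pi$ and $\pi_2=\pi^{-1}$ in the product formula. The left-hand side becomes $R_\varepsilon=\varnothing$, so $(\pi^{-1}R_{\pi^{-1}})\bigtriangleup R_\pi=\varnothing$, i.e. $\pi^{-1}R_{\pi^{-1}}=R_\pi$. Applying $\pi$ to both sides --- legitimate because the componentwise-and-re-sort rule is a genuine action of $\mathbb{S}_n$ on two-element subsets, so that $\pi(\pi^{-1}X)=X$ --- gives $R_{\pi^{-1}}=\pi R_\pi$.

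I expect the only real difficulty to be bookkeeping rather than mathematics. One must fix a composition convention for $\pi_1\pi_2$ compatible with Remark \ref{warning} (braids read top to bottom, hence the top permutation acts first) and never deviate from it; one must keep the re-sorting of pairs explicit, since it is exactly the transposition of $\{i',j'\}$ in the case $i'>j'$ that turns a naive union into a symmetric difference; and one must use the same action of $\mathbb{S}_n$ on unordered pairs both when interpreting $\pi_1^{-1}R_{\pi_2}$ and in the final specialization step. Once these conventions are pinned down, the two-case verification is entirely routine.
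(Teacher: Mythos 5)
Your proposal is correct and follows essentially the same route as the paper: the identity case is dispatched by inspection, the product formula is verified pairwise by splitting on whether $\pi_1(i)>\pi_1(j)$ or $\pi_1(i)<\pi_1(j)$ (the paper organizes this as two inclusions rather than a single biconditional, but the content is identical), and the inversion formula is obtained by specializing the product formula to a composition equal to $\varepsilon$. The only cosmetic difference is that you substitute $\pi_1=\pi$, $\pi_2=\pi^{-1}$ and then apply $\pi$ to both sides, whereas the paper substitutes $\pi_1=\tau^{-1}$, $\pi_2=\tau$ and reads off $R_{\tau^{-1}}=\tau R_\tau$ directly.
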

\begin{proof}

\par Let us prove the first formula. By definition, we have
\[
R_\pi = \{(i,j): i<j,\, \pi(i) > \pi(j)\} = \{(i,j): i<j,\, \pi_2(\pi_1(i)) > \pi_2(\pi_1(j))\},
\]
we have to consider two cases;

\smallskip

\par 1) Let $\pi_1(i) > \pi_1(j)$ then $(i,j) \in R_{\pi_1}$ and $(\pi_1(i), \pi_1(j)) \notin R_{\pi_2}$, i.e., $(i,j) \notin \pi_1^{-1}R_{\pi_2}$,
\par 2) Let $\pi_1(i) < \pi_1(j)$ then $(i,j) \notin R_{\pi_1}$ and $(\pi_1(i), \pi_1(j)) \in R_{\pi_2}$, i.e., $(i,j) \in \pi_1^{-1}R_{\pi_2}$.

\smallskip

\par It follows from considering these cases that if $(i,j) \in R_\pi$ then $(i,j) \in \left(R_{\pi_1} \cup \pi_1^{-1} R_{\pi_2} \right) \setminus \left( R_{\pi_1} \cap \pi_1^{-1} R_{\pi_2}\right) = R_{\pi_1} \bigtriangleup \pi_1^{-1}R_{\pi_2}$, i.e, we have proved that $R_\pi \subseteq R_{\pi_1} \bigtriangleup \pi_1^{-1}R_{\pi_2}$.

\par Let $(i,j) \in R_{\pi_1} \bigtriangleup \pi_1^{-1}R_{\pi_2} = \left(R_{\pi_1} \setminus \pi_1^{-1}R_{\pi_2} \right) \cup \left( \pi_1^{-1}R_{\pi_2} \setminus R_{\pi_1}\right)$. Assume that $(i,j) \in R_{\pi_1} \setminus \pi_1^{-1}R_{\pi_2}$, i.e., $\pi_1(i) > \pi_1(j)$ and $(i,j) \notin \pi_1^{-1} R_{\pi_2}$ then $(\pi_1(i),\pi_1(j)) \notin R_{\pi_2}$ it follows that $\pi_2(\pi_1(i))> \pi_2(\pi_1(j))$, but it means that for the pair $(i,j)$ with $i<j$ we have $\pi_2(\pi_1(i))> \pi_2(\pi_1(j))$, i.e, $(i,j) \in R_\pi.$ And finally, let us assume that $(i,j) \in \pi_1^{-1}R_{\pi_2} \setminus R_{\pi_1}$, i.e., since $\pi_1(i) < \pi_1(j)$ then it follows that $\pi_2(\pi_1(i))>\pi_2(\pi_1(j))$, it exactly means that $(i,j) \in R_\pi.$ We have just proved that $R_{\pi_1} \bigtriangleup \pi_1^{-1}R_{\pi_2} \subseteq R_\pi$, i.e., $R_\pi = R_{\pi_1} \bigtriangleup \pi_1^{-1}R_{\pi_2}$, as claimed.

\smallskip

\par It is obvious that the identity permutation $\varepsilon$ gives rise to the same total order relation $<$ on $\{1, \ldots, n\}$, i.e., $R_\varepsilon = \varnothing$. Let $\pi_1 = \tau^{-1}$ and $\pi_2 = \tau$, then $\pi_1\pi_2 = \varepsilon$, and, using first formula, we get
\[
\varnothing = R_{\tau^{-1}\tau} =\tau R_{\tau} \bigtriangleup R_{\tau^{-1}} = \left(\tau R_\tau \setminus R_{\tau^{-1}}\right) \cup \left(R_{\tau^{-1}} \setminus \tau R_\tau \right),
\]
but it is possible iff $R_{\tau^{-1}} = \tau R_\tau$, as claimed. The proof is completed.
\end{proof}

\smallskip

\begin{definition}{\cite[\S 9.2]{EpThur}}
For any word $w$ (from a monoid $W$) there is a concept of reversal $w^*$  of the word which is an involution (it is also anti-automorphism of $W$). For any non-repeating braid $R_\pi$ we can also extend this concept. We can define (see \cite[p.191]{EpThur}) an involution for any Thurston's generators in the following way:
\[
R^*_\pi:=\pi R_\pi= R_{\pi^{-1}}.
\]
\end{definition}

\smallskip

\begin{lemma}\cite[Lemma 9.1.6]{EpThur}\label{criteria}
A set $R$ of pairs $(i,j)$, with $i <j$, comes from some permutation if and only if the following two conditions are satisfied:
\par \textup{i)} If $(i,j) \in R$ and $(j,k) \in R$, then $(i,k) \in R$.
\par \textup{ii)} If $(i,k) \in R$, then $(i,j) \in R$ or $(j,k) \in R$ for every $j$ with $i<j<k$.
\end{lemma}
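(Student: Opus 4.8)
The plan is to prove the two implications separately, with essentially all the work in the ``if'' direction. \textbf{Necessity.} Suppose $R = R_\pi$. For (i), if $(i,j) \in R$ and $(j,k) \in R$ then $i < j < k$ and $\pi(i) > \pi(j) > \pi(k)$, so $\pi(i) > \pi(k)$ and $(i,k) \in R$. For (ii), if $(i,k) \in R$ and $i < j < k$, then $\pi(i) > \pi(k)$, and since $\pi(j)$ differs from both $\pi(i)$ and $\pi(k)$, either $\pi(j) < \pi(i)$, giving $(i,j) \in R$, or $\pi(j) > \pi(i) > \pi(k)$, giving $(j,k) \in R$. Hence both conditions are necessary.

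\textbf{Sufficiency.} The idea is to reconstruct from $R$ the linear order, and hence the permutation, that it encodes. Given $R$ satisfying (i) and (ii), I would define a relation $\prec$ on $\{1, \ldots, n\}$ by declaring, for each pair $i < j$, that $j \prec i$ when $(i,j) \in R$ and $i \prec j$ otherwise; thus $\prec$ selects exactly one of the two orderings on each two-element subset, so $(\{1, \ldots, n\}, \prec)$ is a tournament. The hard part will be showing that $\prec$ is transitive, which I would do by checking it has no directed $3$-cycle. For $p < q < r$, a $3$-cycle on $\{p, q, r\}$ is either $p \prec q \prec r \prec p$ or $p \prec r \prec q \prec p$; unravelling the definition of $\prec$, the former amounts to $(p,q) \notin R$, $(q,r) \notin R$, $(p,r) \in R$, which contradicts (ii), and the latter amounts to $(p,q) \in R$, $(q,r) \in R$, $(p,r) \notin R$, which contradicts (i). Since a tournament with no directed $3$-cycle is transitive (if $a \prec b \prec c$ then $c \prec a$ would produce a $3$-cycle, so necessarily $a \prec c$), $\prec$ is a strict linear order on $\{1, \ldots, n\}$.

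Finally, let $\pi$ be the permutation sending each $i$ to its rank in $(\{1, \ldots, n\}, \prec)$, so that $i \prec j \iff \pi(i) < \pi(j)$. Then for $i < j$ one has $(i,j) \in R_\pi \iff \pi(i) > \pi(j) \iff j \prec i \iff (i,j) \in R$, so $R = R_\pi$, and $R$ indeed comes from a permutation. Apart from the transitivity step, everything is routine bookkeeping.
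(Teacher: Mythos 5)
Your proof is correct. Note that the paper does not actually prove this lemma --- it is quoted from \cite[Lemma 9.1.6]{EpThur} without proof --- so there is no in-paper argument to compare against; your necessity direction is the routine check, and your sufficiency argument (encode $R$ as a tournament on $\{1,\ldots,n\}$, use conditions (i) and (ii) to rule out the two possible orientations of a directed $3$-cycle, conclude the tournament is a linear order, and read off $\pi$ as the rank function) is a complete and standard way to establish the converse.
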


\smallskip

\par Now we will define (\cite[p. 186]{EpThur}) a very important concept of non-repeating braid.

\begin{definition}
Recall that our set of generators $S_n$ includes only positive crossings; the positive braid monoid is denoted by $\mathbf{Br}^+  =\mathbf{Br}_n^+$. We call a positive braid non-repeating if any two of its strands cross at most once. We define $D =D_n \subset \mathbf{Br}^+_n$ as the set of classes of non-repeating braids.
\end{definition}

\smallskip

\par Let us recall some other concepts from \cite{EpThur}. A partial order in $\mathbb{S}_n$ is defined by setting $\pi_1 \le \pi_2$ if $R_{\pi_1} \subset R_{\pi_2}$. Then, the identity $\varepsilon = \begin{pmatrix}1& \ldots& n\\ 1& \ldots& n \end{pmatrix}$ is the smallest element of $\mathbb{S}_n$ with respect to $\ge$. The largest element is the permutation $\begin{pmatrix}1& \ldots& n\\ n& \ldots& 1 \end{pmatrix}$, which we denote by $\omega$. The corresponding braid $R_\omega$ will be denoted by $\Omega = \Omega_n$ (for more information about this braid (Garside's braid) see below). It is not hard to see that all strands in $\Omega_n$ are crossed.

\smallskip

\par Further, the equation (\ref{Thurstonformulas1}) shows that, if $\pi$ is a permutation, then
\[
R_{\pi \omega} = (\pi^{-1}\Omega) \bigtriangleup R_{\pi} = \Omega \bigtriangleup R_{\pi} = \Omega \setminus R_\pi,
\]
we used the fact that all strands in $\Omega$ are crossed. It follows that for any permutation $\tau$ we have $\tau \Omega = \Omega$.

\begin{definition}\label{star}
A complementation operation $\neg: D \to D$ is defined in the following way
\[
\neg R_\pi : = R_{\pi\omega}.
\]
\end{definition}

\smallskip

\par The following lemma summarizes all the above mentioned concepts and notations.

\smallskip

\begin{lemma}\cite[Lemma 9.1.10 and Lemma 9.1.11]{EpThur}\label{9.1.11}
The homomorphism $p:\mathbf{Br}_n^+ \to \mathbb{S}_n$ is restricted to a bijection $D \to \mathbb{S}_n$. A positive braid $B$ is non-repeating iff $|B| = |p(B)|$ (here $|?|$ means the length of a word). If a non-repeating braid maps to a permutation $\pi$, two strands $i$ and $j$ cross iff $(i,j) \in R_\pi$. Further, if $R_a, R_b \in D$, we have $R_a\cdot R_b = R_{ab}\in D$ iff $\neg R^*_a \supseteq R_b$.
\end{lemma}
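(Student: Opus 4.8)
The plan is to establish the four assertions in the order: first the length criterion $|B|=|p(B)|$ together with the description of the crossing pairs, then the bijectivity of $p|_D$, and finally the multiplication criterion, which will follow from the first part combined with the Thurston's formulas already proved. For the first part, fix $B=\sigma_{i_1}\cdots\sigma_{i_k}\in\mathbf{Br}_n^+$ and, for top positions $i<j$, let $c_{ij}$ be the number of crossings between the strand starting at $i$ and the strand starting at $j$ (an isotopy invariant, as recalled in the Introduction). Reading $B$ from top to bottom, the letter $\sigma_{i_\ell}$ produces exactly one crossing, between the two strands then occupying positions $i_\ell$ and $i_\ell+1$, so $\sum_{i<j}c_{ij}=k=|B|$. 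Since all crossings in a positive braid are positive, $c_{ij}$ is odd precisely when the two strands end in swapped order, i.e. when $p(B)(i)>p(B)(j)$, i.e. when $(i,j)\in R_{p(B)}$; hence $c_{ij}\ge 1$ on $R_{p(B)}$ and $c_{ij}\ge 0$ elsewhere, which yields $|B|=\sum_{i<j}c_{ij}\ge |R_{p(B)}|=|p(B)|$, with equality iff $c_{ij}\le 1$ for every pair, i.e. iff $B$ is non-repeating. In the equality case the pairs of strands that cross are exactly those with $c_{ij}=1$, i.e. the pairs in $R_{p(B)}$; this is the statement about crossings.

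For the bijectivity of the restriction $p\colon D\to\mathbb{S}_n$, surjectivity is immediate: given $\pi\in\mathbb{S}_n$ pick a reduced word $s_{i_1}\cdots s_{i_m}$ for $\pi$ ($m=|\pi|$) and set $B=\sigma_{i_1}\cdots\sigma_{i_m}$; then $p(B)=\pi$ and $|B|=m=|p(B)|$, so $B\in D$ by the first part. For injectivity, let $B,B'\in D$ with $p(B)=p(B')=\pi$. By the first part both words are geodesic, so their images in $\mathbb{S}_n$ are two reduced expressions for $\pi$, hence linked by a sequence of braid moves $s_is_js_i\cdots=s_js_is_j\cdots$ (Tits' solution of the word problem for Coxeter groups; no cancellation $s_is_i\to 1$ occurs, the words being reduced). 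Each such move lifts verbatim to a defining relation of $\mathbf{Br}_n$, so $B$ and $B'$ are connected by the same relations and therefore coincide in $\mathbf{Br}_n^+$, i.e. as elements of $D$. I expect this injectivity step to be the main obstacle, since it genuinely uses the word property of Coxeter groups; alternatively one could induct on $|\pi|$, using the exchange condition to extract a common last letter from $B$ and $B'$, but some input of comparable strength seems unavoidable.

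For the multiplication criterion, write $a=p(R_a)$, $b=p(R_b)$. Since $p$ is a homomorphism, $p(R_a\cdot R_b)=ab$, so by bijectivity $R_a\cdot R_b$ lies in $D$ iff it equals $R_{ab}$; thus it is enough to decide when the product braid is non-repeating. As a word, $R_a\cdot R_b$ has length $|R_a|+|R_b|=|a|+|b|$, while $|p(R_aR_b)|=|ab|=|R_{ab}|=\bigl|(a^{-1}R_b)\bigtriangleup R_a\bigr|$ by the Thurston's formulas. Using $|a^{-1}R_b|=|R_b|$ (a permutation acts on pairs bijectively) and $|X\bigtriangleup Y|=|X|+|Y|-2|X\cap Y|$, the first part shows: $R_aR_b$ is non-repeating iff $(a^{-1}R_b)\cap R_a=\varnothing$. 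On the other hand, $\neg R_a^*=\neg R_{a^{-1}}=R_{a^{-1}\omega}=\Omega\setminus R_{a^{-1}}=\Omega\setminus(aR_a)$, where we used $R_a^*=R_{a^{-1}}$, Definition \ref{star}, the identity $R_{\pi\omega}=\Omega\setminus R_\pi$, and $R_{a^{-1}}=aR_a$. Hence $\neg R_a^*\supseteq R_b$ iff $R_b\cap(aR_a)=\varnothing$, and applying the bijection $a^{-1}$ on pairs transforms this into $(a^{-1}R_b)\cap R_a=\varnothing$, the same condition as before. Therefore $R_a\cdot R_b=R_{ab}\in D$ iff $\neg R_a^*\supseteq R_b$, which completes the proof.
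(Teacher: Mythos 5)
The paper does not prove this lemma at all: it is imported verbatim from Epstein et al.\ (Lemmas 9.1.10 and 9.1.11 of \cite{EpThur}) with only a citation, so there is no in-paper argument to compare against. Your blind proof is, as far as I can check, correct and self-contained modulo two standard external inputs that you use openly: the identity between the Coxeter length of a permutation and its number of inversions (so that $|p(B)|=|R_{p(B)}|$), and Matsumoto--Tits' word property for $\mathbb{S}_n$, which is exactly what makes the injectivity of $p|_D$ work, since reduced expressions are linked by braid moves alone and these lift to the defining relations of $\mathbf{Br}_n^+$. The crossing-count inequality $|B|=\sum_{i<j}c_{ij}\ge\#\{(i,j):c_{ij}\text{ odd}\}=|p(B)|$, with equality exactly in the non-repeating case, is clean and also delivers the ``two strands cross iff $(i,j)\in R_\pi$'' clause for free; note only that the well-definedness of $c_{ij}$ on $\mathbf{Br}_n^+$ should be justified by checking it on the two homogeneous defining relations rather than by appeal to isotopy invariance of the signed count (which a priori concerns equivalence in the group, not the monoid) --- both relations visibly preserve each pairwise crossing number, so this is a one-line fix. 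The derivation of the product criterion from $|R_{ab}|=|(a^{-1}R_b)\bigtriangleup R_a|=|a|+|b|-2|(a^{-1}R_b)\cap R_a|$ is a nice use of the Thurston formulas the paper does prove, and the translation to $\neg R_a^*\supseteq R_b$ via $\neg R_a^*=\Omega\setminus aR_a$ is correct since $R_b\subseteq\Omega$ automatically and permutations act bijectively on pairs.
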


\smallskip

\par Let us illustrate all these concepts and definitions via the following

\begin{example}
Let us consider the permutation
\[
\pi = \begin{pmatrix}1&2&3&4&5&6 \\ 4&2&6&1&5&3 \end{pmatrix},
\]
we have
\begin{align*}
& \begin{cases}1<2,\\ \pi(1) > \pi(2)\end{cases}, \quad \begin{cases}1<4, \\ \pi(1) > \pi(4)\end{cases}, \quad  \begin{cases}1<6, \\ \pi(1) > \pi(6)\end{cases}, \quad  \begin{cases}2<4, \\ \pi(2) > \pi(4)\end{cases},\\
& \begin{cases}3<4,\\ \pi(3) > \pi(4)\end{cases}, \quad \begin{cases}3<5,\\ \pi(3) > \pi(5)\end{cases}, \quad \begin{cases}3<6,\\ \pi(3) > \pi(6)\end{cases}, \quad \begin{cases}5<6,\\ \pi(5) > \pi(6)\end{cases},
\end{align*}
it follows that
\[
R_\pi =  \{(1,2),(1,4),(1,6),(2,4),(3,4),(3,5),(3,6),(5,6)\}.
\]

\smallskip

\par The corresponding braid is shown in the figure \ref{dem1}. We can also find the permutation $\neg \pi = \pi \omega$; we have
\[
\neg \pi = \pi \omega = \begin{pmatrix}1&2&3&4&5&6 \\ 4&2&6&1&5&3 \end{pmatrix} \begin{pmatrix}1&2&3&4&5&6 \\ 6&5&4&3&2&1 \end{pmatrix} = \begin{pmatrix}1&2&3&4&5&6 \\ 3&5&1&6&2&4 \end{pmatrix},
\]

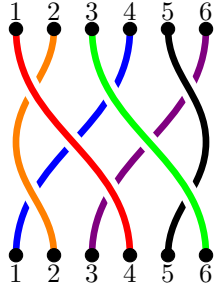
\begin{figure}[h!]
\begin{center}
\begin{tikzpicture}
\draw [line width = 2.5,violet] (2.5,0) to [out = 270, in = 90] (1,-3);
\draw [line width = 7,white] (2,0) to [out = 270, in = 90] (2.5,-1.5) to [out = 270, in = 90] (2,-3);
\draw [line width = 2.5,black] (2,0) to [out = 270, in = 90] (2.5,-1.5) to [out = 270, in = 90] (2,-3);
\draw [line width = 7,white] (1.5,0) to [out = 270, in = 90] (0,-3);
\draw [line width = 2.5,blue] (1.5,0) to [out = 270, in = 90] (0,-3);
\draw [line width = 7,white] (1,0) to [out = 270, in = 90] (2.5,-3);
\draw [line width = 2.5,green] (1,0) to [out = 270, in = 90] (2.5,-3);
\draw [line width = 7,white] (0.5,0) to [out = 270, in = 90] (0,-1.5) to [out = 270, in = 90] (0.5,-3);
\draw [line width = 2.5,orange] (0.5,0) to [out = 270, in = 90] (0,-1.5) to [out = 270, in = 90] (0.5,-3);
\draw [line width = 7,white] (0,0) to [out = 270, in = 90] (1.5,-3);
\draw [line width = 2.5,red] (0,0) to [out = 270, in = 90] (1.5,-3);
\draw [fill](0,0) circle (2.5pt);
\draw [fill](0.5,0) circle (2.5pt);
\draw [fill](1,0) circle (2.5pt);
\draw [fill](1.5,0) circle (2.5pt);
\draw [fill](2,0) circle (2.5pt);
\draw [fill](2.5,0) circle (2.5pt);
\draw [fill](0,-3) circle (2.5pt);
\draw [fill](0.5,-3) circle (2.5pt);
\draw [fill](1,-3) circle (2.5pt);
\draw [fill](1.5,-3) circle (2.5pt);
\draw [fill](2,-3) circle (2.5pt);
\draw [fill](2.5,-3) circle (2.5pt);
\node[above] at (0,0){$1$};
\node[above] at (0.5,0){$2$};
\node[above] at (1,0){$3$};
\node[above] at (1.5,0){$4$};
\node[above] at (2,0){$5$};
\node[above] at (2.5,0){$6$};
\node[below] at (0,-3){$1$};
\node[below] at (0.5,-3){$2$};
\node[below] at (1,-3){$3$};
\node[below] at (1.5,-3){$4$};
\node[below] at (2,-3){$5$};
\node[below] at (2.5,-3){$6$};
\end{tikzpicture}
\caption{Here a non-repeating braid is shown.}\label{dem1}
\end{center}
\end{figure}

we have
\[
\neg R_\pi = R_{\neg \pi} = \{(1,3),(1,5),(2,3),(2,5),(2,6),(4,5),(4,6)\},
\]
we see that these pairs correspond exactly to the non-crossing strands. Let us consider the action of the permutation $\pi$ over the set $R_\pi$, i.e., let us consider the images of all pairs of $R_\pi$ under the permutation $\pi$. As was explained, the image of a pair under permutation is defined by taking the image of each component and reordering, if necessary, so the smaller number comes first. We have
\begin{multline*}
\pi R_\pi = \{(\pi(1),\pi(2)),(\pi(1),\pi(4)),(\pi(1),\pi(6)),(\pi(2),\pi(4)),(\pi(3),\pi(4)),(\pi(3),\pi(5)),
(\pi(3),\pi(6)),(\pi(5),\pi(6))\} = \\ = \{(2,4),(1,4),(3,4),(1,2),(1,6),(5,6),(3,6),(3,5)\} = \{(1,2),(1,4),(1,6),(2,4),(3,4),(3,5),(3,6),(5,6)\},
\end{multline*}
we see that these pairs are exactly the crossing strands which are numbered with respect to the bottom boundary of the braid.
\end{example}

\section{Thurston's operations and the greedy normal form}
In this section we will discuss and study the working of Thurston's automaton; this automaton allows defining some binary operations on non-repeating braids. We will describe these operations and prove some very interesting formulas; as a result, we will obtain the Gr\"obner --- Shirshov basis for the braid monoids.

\smallskip

\begin{definition}\cite[Proposition 9.1.8]{EpThur} The partial order $\ge$ imposes a lattice structure on $\mathbb{S}_n$, that is, given permutations $\pi_1$ and $\pi_2$, there is a largest element $\pi_1 \wedge \pi_2$ smaller than $\pi_1$ and $\pi_2$, which is defined by the set
\[
R_{\pi_1} \wedge R_{\pi_2} :=\{(i,k)\in R_{\pi_1} \cap R_{\pi_2}: (i,j) \in R_{\pi_1} \cap R_{\pi_2} \mbox{ or } (j,k) \in R_{\pi_1} \cap R_{\pi_2} \mbox{ for all $j$ with } i < j < k\},
\]
there is a smallest element $\pi_1 \vee \pi_2$ larger than $\pi_1$ and $\pi_2$, which can be defined as
\[
R_{\pi_1} \vee R_{\pi_2}: = \neg(\neg R_{\pi_1} \wedge \neg R_{\pi_2}).
\]
\end{definition}

\par Roughly speaking, the set $R_{\pi_1} \cap R_{\pi_2}$ should satisfy the condition ii) of Lemma \ref{criteria}, otherwise we have to put $R_{\pi_1} \wedge R_{\pi_2} = R_\varepsilon$. The following example can help to understand this concept (see also \cite[p.185]{EpThur}).

\smallskip

\begin{example}
Let us consider the following braids $R_a$ and $R_b$ (see fig.\ref{GNF}), where
\[
a = \begin{pmatrix}1&2&3&4&5&6\\3&5&4&2&6&1 \end{pmatrix}, \quad b = \begin{pmatrix}1&2&3&4&5&6\\2&1&5&6&3&4 \end{pmatrix},
\]
\begin{figure}[h!]
\begin{center}
\begin{tikzpicture}
\draw [line width = 2.5,red] (2.5,0) to [out = 270, in = 90] (0,-3);
\draw [line width = 7,white] (2,0) to [out = 270, in = 90] (2.5,-3);
\draw [line width = 2.5,red] (2,0) to [out = 270, in = 90] (2.5,-3);
\draw [line width = 7,white] (1.5,0) to [out = 270, in = 90] (0,-1.5) to [out = 270,in = 90] (0.5,-3);
\draw [line width = 2.5,red] (1.5,0) to [out = 270, in = 90] (0,-1.5) to [out = 270,in = 90] (0.5,-3);
\draw [line width = 7,white] (1,0) to [out = 270, in = 90] (1.7,-1.5) to [out = 270,in = 90] (1.5,-3);
\draw [line width = 2.5,red] (1,0) to [out = 270, in = 90] (1.7,-1.5) to [out = 270,in = 90] (1.5,-3);
\draw [line width = 7,white] (0.5,0) to [out = 270, in = 90] (2,-3);
\draw [line width = 2.5,red] (0.5,0) to [out = 270, in = 90] (2,-3);
\draw [line width = 7,white] (0,0) to [out = 270, in = 90] (1,-3);
\draw [line width = 2.5,red] (0,0) to [out = 270, in = 90] (1,-3);
\draw [line width = 2.5,blue] (2.5,-3) to [out = 270, in = 90] (1.5,-6);
\draw [line width = 2.5,blue] (2,-3) to [out = 270, in = 90] (1,-6);
\draw [line width = 7,white] (1.5,-3) to [out = 270, in = 90] (2.5,-6);
\draw [line width = 2.5,blue] (1.5,-3) to [out = 270, in = 90] (2.5,-6);
\draw [line width = 7,white] (1,-3) to [out = 270, in = 90] (2,-6);
\draw [line width = 2.5,blue] (1,-3) to [out = 270, in = 90] (2,-6);
\draw [line width = 2.5,blue] (0.5,-3) to [out = 270, in = 90] (0,-6);
\draw [line width = 7,white] (0,-3) to [out = 270, in = 90] (0.5,-6);
\draw [line width = 2.5,blue] (0,-3) to [out = 270, in = 90] (0.5,-6);
\draw [fill](0,0) circle (2.5pt);
\draw [fill](0.5,0) circle (2.5pt);
\draw [fill](1,0) circle (2.5pt);
\draw [fill](1.5,0) circle (2.5pt);
\draw [fill](2,0) circle (2.5pt);
\draw [fill](2.5,0) circle (2.5pt);
\draw [fill](0,-3) circle (2.5pt);
\draw [fill](0.5,-3) circle (2.5pt);
\draw [fill](1,-3) circle (2.5pt);
\draw [fill](1.5,-3) circle (2.5pt);
\draw [fill](2,-3) circle (2.5pt);
\draw [fill](2.5,-3) circle (2.5pt);
\draw [fill](0,-6) circle (2.5pt);
\draw [fill](0.5,-6) circle (2.5pt);
\draw [fill](1,-6) circle (2.5pt);
\draw [fill](1.5,-6) circle (2.5pt);
\draw [fill](2,-6) circle (2.5pt);
\draw [fill](2.5,-6) circle (2.5pt);
\node[above] at (0,0){$1$};
\node[above] at (0.5,0){$2$};
\node[above] at (1,0){$3$};
\node[above] at (1.5,0){$4$};
\node[above] at (2,0){$5$};
\node[above] at (2.5,0){$6$};
\node[below] at (0,-6){$1$};
\node[below] at (0.5,-6){$2$};
\node[below] at (1,-6){$3$};
\node[below] at (1.5,-6){$4$};
\node[below] at (2,-6){$5$};
\node[below] at (2.5,-6){$6$};
\end{tikzpicture}
\end{center}
\caption{Despite the fact that $aR_a \cap \neg R_b= \{(1,3), (1,4), (1,5), (1,6), (2,3), (2,4), (2,5)\} \ne \varnothing$, nevertheless, the braid criterion is false, i.e., $aR_a \wedge \neg R_b = R_\varepsilon.$}\label{GNF}
\end{figure}
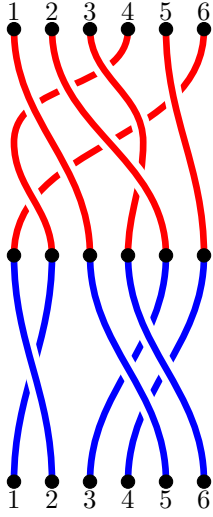
then we get
\begin{align*}
&aR_a = \{(1,2), (1,3), (1,4), (1,5), (1,6), (2,3), (2,4), (2,5), (4,5)\},\\
&\neg R_b = \{(1,3), (1,4), (1,5),(1,6), (2,3), (2,4), (2,5), (2,6), (3,4), (5,6)\},
\end{align*}
then
\[
aR_a \cap \neg R_b = \{(1,3), (1,4), (1,5), (1,6), (2,3), (2,4), (2,5)\},
\]
using Lemma \ref{criteria}, we see that $(1,2) \notin aR_a \cap \neg R_b$ or $(2,6) \notin aR_a \cap \neg R_b$ then $aR_a \wedge \neg R_b = R_\varepsilon$.
\end{example}

\smallskip

\par Let us consider two partial orderings \cite[Definition 9.1.12]{EpThur}  in the semigroup $\mathbf{Br}^+$ , defined as follows: if $ab = c$ for $a,b,c \in \mathbf{Br}^+$, we write $a \prec c$ and $c \succ b$, and say that $a$ is a {\it head} and $b$ is a {\it tail} of $c$. Now we can present Thurston's automaton $M$ over $A$, having the set of states $D$. A characteristic feature of $M$ is that, after a word $b$ is inputted, the state $d$ of $M$ is the maximal tail of $R_b \in \mathbf{Br}^+$ that lies in $D$.

\smallskip

\begin{definition}{\cite[Proposition 9.2.1]{EpThur}}\label{ThurA} If $M$ is in a state $a \in A$ and we input a word $b\in A$ representing an element $R_b\in D$, then the resulting state is
\[
M(a,w) = (aR_a \wedge \neg R_w)R_w;
\]
the element $aR_a \wedge \neg R_b \in D$ is the maximal tail of $R_a$ that gives an element of $D$ when multiplied on the right by $R_b$.
\end{definition}

\smallskip

\begin{remark}
As we mentioned (see remark \ref{warning}), Thurston considered that a braid starts on the right, and that the crossings get added as we move left, i.e., $R_aR_b$ means that $R_b$ is the first braid, and then we have to add the braid $R_a$ on the left. In his original definition (see \cite[Proposition 9.2.1]{EpThur}) he defined the automaton $M$ via the following formula
\[
M(a,b) = (R_a \wedge \neg bR_b)R_b,
\]
but since we use standard notations, this formula has to be rewritten in the above mentioned form.
\end{remark}

\smallskip

\par From the construction of Thurston's automaton $M$, it follows that $M$ finds maximal tails and, as a result, it rewrites any word to some form. We have the following

\smallskip

\begin{definition}{\cite[Theorem 9.2.2, Propositon 9.2.3]{EpThur}} A word $R_w$ over $S_n$ is in the right--greedy normal form iff it has a decomposition
\[
R_w = R_{\pi_1}R_{\pi_2} \cdots R_{\pi_\ell},
\]
where each $R_{\pi_i} \in D$ is a non-repeating braid and $\pi_iR_{\pi_i} \wedge \neg R_{\pi_{i+1}} = R_\varepsilon$, for any $1 \le i < \ell$. Geometrically, if two strands that are adjacent at the boundary of $R_{\pi_i}$ and $R_{\pi_{i+1}}$ cross in $R_{\pi_i}$, they also cross in $R_{\pi_{i+1}}$.
\end{definition}

\par For example, let us consider the fig.\ref{GNF}. We have seen that $aR_a \wedge \neg R_b = R_\varepsilon$, so $R_aR_b$ is the right greedy normal form. Also, if we have a look at the boundary of these braids, then we see that the first and second strands are crossed in the red and blue braids, it is also true for the forth and fifth strands.

\smallskip

\par The construction of Thurston's automaton gives rise the following

\begin{definition}\label{con1}
Let $R_a$ and $R_b$ be non-repeating braids, let us define two non-repeating braids $R_a \asymp R_b$ and $R_a \bowtie R_b$ by the following formulas:
\begin{align}
&R_a \asymp R_b : = R_{a\left( a^{-1} \wedge \omega b \right)},\label{a-b}\\
&R_a \bowtie R_b: = R_{\left( a^{-1} \wedge \omega b\right)^{-1}b}.\label{a+b}
\end{align}
\end{definition}

\smallskip

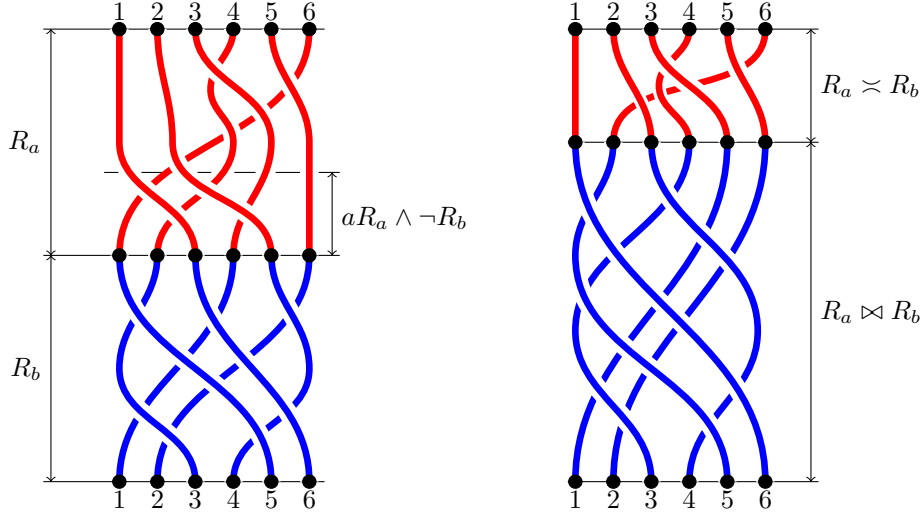
\begin{figure}
\begin{center}
\begin{tikzpicture}
\draw[thin] (2.7,0) -- (-1,0);
\draw[thin] (-0.2,-1.9) -- (2.9,-1.9);
\draw[thin] (2.9,-3) -- (-1,-3);
\draw[thin] (2.7,-6) -- (-1,-6);
\draw[<->,thin] (-0.9,0) -- (-0.9,-3);
\draw[<->,thin] (-0.9,-3) -- (-0.9,-6);
\draw[<->,thin] (2.8,-1.9) -- (2.8,-3);
\node[right] at (2.8,-2.5){$aR_a\wedge \neg R_b$};
\node[left] at (-0.9,-1.5){$R_a$};
\node[left] at (-0.9,-4.5){$R_b$};
\draw [line width = 2.5,red] (2.5,0) to [out = 270, in = 90] (0,-3);
\draw [line width = 7,white] (2,0) to [out = 270, in = 90] (2.5,-1.5) to [out = 270, in = 90] (2.5,-3);
\draw [line width = 2.5,red] (2,0) to [out = 270, in = 90] (2.5,-1.5) to [out = 270, in = 90] (2.5,-3);
\draw [line width = 7,white] (1.5,0) to [out = 270, in = 90] (1.2,-0.8) to [out = 270,in = 90] (1.5,-1.5) to [out = 270, in = 90] (0.5,-3);
\draw [line width = 2.5,red] (1.5,0) to [out = 270, in = 90] (1.2,-0.8) to [out = 270,in = 90] (1.5,-1.5) to [out = 270, in = 90] (0.5,-3);
\draw [line width = 7,white] (1,0) to [out = 270, in = 90] (2,-1.5) to [out = 270, in = 90] (1.5,-3);
\draw [line width = 2.5,red] (1,0) to [out = 270, in = 90] (2,-1.5) to [out = 270, in = 90] (1.5,-3);
\draw [line width = 7,white] (0.5,0) to [out = 270, in = 90] (0.7,-1.5) to [out = 270, in = 90] (2,-3);
\draw [line width = 2.5,red] (0.5,0) to [out = 270, in = 90] (0.7,-1.5) to [out = 270, in = 90] (2,-3);
\draw [line width = 7,white] (0,0) to [out = 270, in = 90] (0,-1.5) to [out = 270, in = 90] (1,-3);
\draw [line width = 2.5,red] (0,0) to [out = 270, in = 90] (0,-1.5) to [out = 270, in = 90] (1,-3);
\draw [line width = 2.5,blue] (2.5,-3) to [out = 270, in=90] (0.5,-6);
\draw [line width = 7,white] (2,-3) to [out = 270, in = 90] (2.5,-4.5) to [out = 270, in=90] (1.5,-6);
\draw [line width = 2.5,blue] (2,-3) to [out = 270, in = 90] (2.5,-4.5) to [out = 270, in=90] (1.5,-6);
\draw [line width = 2.5,blue] (1.5,-3) to [out = 270, in = 90] (0,-6);
\draw [line width = 7,white] (1,-3) to [out = 270, in = 90] (2.5,-6);
\draw [line width = 2.5,blue] (1,-3) to [out = 270, in = 90] (2.5,-6);
\draw [line width = 7,white] (0.5,-3) to [out = 270, in = 90] (0,-4.5) to [out=270, in = 90] (1,-6);
\draw [line width = 2.5,blue] (0.5,-3) to [out = 270, in = 90] (0,-4.5) to [out=270, in = 90] (1,-6);
\draw [line width = 7,white] (0,-3) to [out = 270, in = 90] (2,-6);
\draw [line width = 2.5,blue] (0,-3) to [out = 270, in = 90] (2,-6);
\draw [fill](0,0) circle (2.5pt);
\draw [fill](0.5,0) circle (2.5pt);
\draw [fill](1,0) circle (2.5pt);
\draw [fill](1.5,0) circle (2.5pt);
\draw [fill](2,0) circle (2.5pt);
\draw [fill](2.5,0) circle (2.5pt);
\draw [fill](0,-3) circle (2.5pt);
\draw [fill](0.5,-3) circle (2.5pt);
\draw [fill](1,-3) circle (2.5pt);
\draw [fill](1.5,-3) circle (2.5pt);
\draw [fill](2,-3) circle (2.5pt);
\draw [fill](2.5,-3) circle (2.5pt);
\draw [fill](0,-6) circle (2.5pt);
\draw [fill](0.5,-6) circle (2.5pt);
\draw [fill](1,-6) circle (2.5pt);
\draw [fill](1.5,-6) circle (2.5pt);
\draw [fill](2,-6) circle (2.5pt);
\draw [fill](2.5,-6) circle (2.5pt);
\node[above] at (0,0){$1$};
\node[above] at (0.5,0){$2$};
\node[above] at (1,0){$3$};
\node[above] at (1.5,0){$4$};
\node[above] at (2,0){$5$};
\node[above] at (2.5,0){$6$};
\node[below] at (0,-6){$1$};
\node[below] at (0.5,-6){$2$};
\node[below] at (1,-6){$3$};
\node[below] at (1.5,-6){$4$};
\node[below] at (2,-6){$5$};
\node[below] at (2.5,-6){$6$};
\draw[thin] (5.9,0) -- (9.2,0);
\draw[thin] (5.9,-1.5) -- (9.2,-1.5);
\draw[thin] (5.9,-6) -- (9.2,-6);
\draw[<->,thin] (9.1,0) -- (9.1,-1.5);
\draw[<->,thin] (9.1,-1.5) -- (9.1,-6);
\node[right] at (9.1,-0.75){$R_a \asymp R_b$};
\node[right] at (9.1,-3.75){$R_a \bowtie R_b$};
\draw [line width = 2.5,red] (8.5,0) to [out = 270, in = 90] (6.5,-1.5);
\draw [line width = 7,white] (8,0) to [out = 270, in = 90] (8.5,-1.5);
\draw [line width = 2.5,red] (8,0) to [out = 270, in = 90] (8.5,-1.5);
\draw [line width = 5,white] (7.5,0) to [out = 270, in = 90] (7.1,-0.75) to [out = 270,in = 90] (7.5,-1.5);
\draw [line width = 2.5,red] (7.5,0) to [out = 270, in = 90] (7.1,-0.75) to [out = 270,in = 90] (7.5,-1.5);
\draw [line width = 7,white] (7,0) to [out = 270, in = 90] (8,-1.5);
\draw [line width = 2.5,red] (7,0) to [out = 270, in = 90] (8,-1.5);
\draw [line width = 5,white] (6.5,0) to [out = 270, in = 90] (7,-1.5);
\draw [line width = 2.5,red] (6.5,0) to [out = 270, in = 90] (7,-1.5);
\draw [line width = 2.5,red] (6,0) to [out = 270, in = 90] (6,-1.5);
\draw [line width = 2.5,blue] (8.5,-1.5) to [out = 270, in = 90] (6.5,-6);
\draw [line width = 2.5,blue] (8,-1.5) to [out = 270, in = 90] (6,-6);
\draw [line width = 7,white] (7.5,-1.5) to [out = 270, in = 90] (6,-4) to [out = 270,in=90] (7,-6);
\draw [line width = 2.5,blue] (7.5,-1.5) to [out = 270, in = 90] (6,-4) to [out = 270,in=90] (7,-6);
\draw [line width = 7,white] (7,-1.5) to [out = 270, in = 90] (8.4,-4) to [out = 270,in=90] (7.5,-6);
\draw [line width = 2.5,blue] (7,-1.5) to [out = 270, in = 90] (8.4,-4) to [out = 270,in=90] (7.5,-6);
\draw [line width = 7,white] (6.5,-1.5) to [out = 270, in = 90] (6,-3) to [out = 270,in=90] (8,-6);
\draw [line width = 2.5,blue] (6.5,-1.5) to [out = 270, in = 90] (6,-3) to [out = 270,in=90] (8,-6);
\draw [line width = 7,white] (6,-1.5) to [out = 270, in = 90] (8.5,-6);
\draw [line width = 2.5,blue] (6,-1.5) to [out = 270, in = 90] (8.5,-6);
\draw [fill](6,0) circle (2.5pt);
\draw [fill](6.5,0) circle (2.5pt);
\draw [fill](7,0) circle (2.5pt);
\draw [fill](7.5,0) circle (2.5pt);
\draw [fill](8,0) circle (2.5pt);
\draw [fill](8.5,0) circle (2.5pt);
\draw [fill](6,-1.5) circle (2.5pt);
\draw [fill](6.5,-1.5) circle (2.5pt);
\draw [fill](7,-1.5) circle (2.5pt);
\draw [fill](7.5,-1.5) circle (2.5pt);
\draw [fill](8,-1.5) circle (2.5pt);
\draw [fill](8.5,-1.5) circle (2.5pt);
\draw [fill](6,-6) circle (2.5pt);
\draw [fill](6.5,-6) circle (2.5pt);
\draw [fill](7,-6) circle (2.5pt);
\draw [fill](7.5,-6) circle (2.5pt);
\draw [fill](8,-6) circle (2.5pt);
\draw [fill](8.5,-6) circle (2.5pt);
\node[above] at (6,0){$1$};
\node[above] at (6.5,0){$2$};
\node[above] at (7,0){$3$};
\node[above] at (7.5,0){$4$};
\node[above] at (8,0){$5$};
\node[above] at (8.5,0){$6$};
\node[below] at (6,-6){$1$};
\node[below] at (6.5,-6){$2$};
\node[below] at (7,-6){$3$};
\node[below] at (7.5,-6){$4$};
\node[below] at (8,-6){$5$};
\node[below] at (8.5,-6){$6$};
\end{tikzpicture}
\end{center}
\caption{Here is shown the rewriting $R_aR_b$ (where $R_a$ is shown as the red braid and $R_b$ is shown as the blue braid) to the right greedy normal form. The rewriting process looks like addition of the maximal tail $aR_a\wedge \neg R_b$ of the red braid to the blue braid.}\label{figCD1}
\end{figure}
\begin{example}
Let us consider the following braids (see fig.\ref{figCD1}). We have
$$
R_a =\{(1,4),(1,6),(2,3),(2,4),(2,6),(3,4),(3,6),(5,6)\}, \quad a= \begin{pmatrix} 1&2&3&4&5&6 \\ 3&5&4&2&6&1  \end{pmatrix},
$$

$$
 R_a^* = aR_a = \{(1,2),(1,3),(1,4),(1,5),(1,6),(2,3),(2,4),(2,5),(4,5)\} , \quad  a^{-1} = \begin{pmatrix} 1 & 2&3&4&5&6 \\ 6&4&1&3&2&5  \end{pmatrix}
$$
$$
\neg R_b = \{(1,3),(2,3),(2,5),(4,5),(4,6)\}, \, b = \begin{pmatrix}1&2&3&4&5&6\\ 5&3&6&1&4&2 \end{pmatrix}, \, \omega b = \begin{pmatrix}1&2&3&4&5&6\\2&4&1&6&3&5  \end{pmatrix}
$$
then we have
$$
R_a^* \cap \neg R_b = \{(1,3), (2,3),(2,5),(4,5)\},
$$
It is not difficult to see that this set satisfies the conditions of Lemma \ref{criteria}, i.e., we have
$$
R_a^* \wedge \neg R_b = \{(1,3), (2,3),(2,5),(4,5)\},\qquad  a^{-1} \wedge b \omega= \begin{pmatrix}1&2&3&4&5&6\\2&4&1&5&3&6 \end{pmatrix}.
$$
Thus we get
$$
a (a^{-1} \wedge b\omega ) = \begin{pmatrix} 1&2&3&4&5&6 \\ 3&5&4&2&6&1  \end{pmatrix} \begin{pmatrix}1&2&3&4&5&6\\2&4&1&5&3&6 \end{pmatrix}=  \begin{pmatrix}1&2&3&4&5&6\\ 1&3&5&4&6&2  \end{pmatrix}
$$
and
$$
(a^{-1} \wedge b\omega )^{-1}b = \begin{pmatrix}1&2&3&4&5&6\\3&1&5&2&4&6 \end{pmatrix} \begin{pmatrix}1&2&3&4&5&6\\ 5&3&6&1&4&2 \end{pmatrix} = \begin{pmatrix}1&2&3&4&5&6\\ 6&5&4&3&1&2 \end{pmatrix}.
$$
\end{example}

\smallskip

\par Thurston's operation $\asymp$ and $\bowtie$ has another interpretation via a set theory spirit (see the proposition below), but we will see that this interpretation is not so useful for our purposes.

\smallskip

\begin{proposition}
For any two non-repeating braids $R_a, R_b \in D$ with $aR_a \wedge \neg R_b \ne R_\varepsilon$, the Thurston's operations can be described in the following way
\begin{align*}
& R_a \asymp R_b = R_a \cap a^{-1}R_b,\\
& (R_a \bowtie R_b)^* = bR_b \cup baR_a,
\end{align*}
the sets $R_a \cap a^{-1}R_b$, $bR_b \cup baR_a$ satisfy to the conditions of Lemma \ref{criteria}.
\end{proposition}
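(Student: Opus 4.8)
The plan is to reduce both identities to elementary Boolean algebra of subsets of the set $\mathcal P$ of all pairs $(i,j)$ with $i<j$, by unwinding the definitions \eqref{a-b}--\eqref{a+b} through Thurston's formulas \eqref{Thurstonformulas1}. I will write $R_c:=aR_a\wedge\neg R_b$ and let $c$ be the permutation whose inversion set is $R_c$, so that $R_a\asymp R_b=R_{ac}$ and $R_a\bowtie R_b=R_{c^{-1}b}$. Three facts will be used throughout: $aR_a=R_{a^{-1}}$; $\neg R_b=R_{b\omega}=\Omega\setminus R_b$ is the complement $\overline{R_b}$ of $R_b$ in $\mathcal P$; and for any permutation $\tau$ the action $X\mapsto\tau X$ is a bijection of $\mathcal P$ that commutes with $\cap$, $\cup$ and complementation and satisfies $\tau^{-1}(\tau X)=X$. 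The one point where the hypothesis $aR_a\wedge\neg R_b\neq R_\varepsilon$ is needed is to know that the meet does not properly shrink the intersection; with the paper's convention (where $R_{\pi_1}\wedge R_{\pi_2}$ equals $R_{\pi_1}\cap R_{\pi_2}$ if the latter satisfies condition ii) of Lemma \ref{criteria} and equals $R_\varepsilon$ otherwise) the hypothesis gives
\[
R_c=aR_a\cap\neg R_b=R_{a^{-1}}\setminus R_b .
\]

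For $\asymp$ I would argue as follows. By Thurston's formula $R_{ac}=(a^{-1}R_c)\bigtriangleup R_a$. From $R_c\subseteq aR_a=R_{a^{-1}}$ we get $a^{-1}R_c\subseteq a^{-1}R_{a^{-1}}=R_a$, so this symmetric difference is just $R_{ac}=R_a\setminus a^{-1}R_c$; in particular $R_{ac}\subseteq R_a$. Applying $a^{-1}$ to $R_c=R_{a^{-1}}\cap\overline{R_b}$ and using that $a^{-1}$ commutes with $\cap$ and with complementation yields $a^{-1}R_c=R_a\cap\overline{a^{-1}R_b}=R_a\setminus a^{-1}R_b$. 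Hence
\[
R_a\asymp R_b=R_{ac}=R_a\setminus a^{-1}R_c=R_a\setminus(R_a\setminus a^{-1}R_b)=R_a\cap a^{-1}R_b .
\]
(The inclusion $R_{ac}\supseteq R_a\cap a^{-1}R_b$ already follows from $a^{-1}R_c\subseteq R_a\setminus a^{-1}R_b$, i.e. from $R_c\subseteq aR_a$ and $R_c\subseteq\neg R_b$ alone; equality is exactly what the hypothesis buys.) Since the right-hand side is the inversion set $R_{ac}$ of a permutation, it satisfies the two conditions of Lemma \ref{criteria} by that lemma.

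For $\bowtie$, first $(R_a\bowtie R_b)^{*}=(R_{c^{-1}b})^{*}=R_{(c^{-1}b)^{-1}}=R_{b^{-1}c}$, and Thurston's formula gives $R_{b^{-1}c}=(bR_c)\bigtriangleup R_{b^{-1}}$. Applying $b$ to $R_c=aR_a\cap\neg R_b$, and using $b(\neg R_b)=b(\overline{R_b})=\overline{bR_b}=\overline{R_{b^{-1}}}$, gives $bR_c=b(aR_a)\setminus R_{b^{-1}}$, which is disjoint from $R_{b^{-1}}=bR_b$; so the symmetric difference is a disjoint union:
\[
(R_a\bowtie R_b)^{*}=R_{b^{-1}c}=bR_c\cup R_{b^{-1}}=\bigl(b(aR_a)\setminus bR_b\bigr)\cup bR_b=bR_b\cup b(aR_a) .
\]
Reading $baR_a$ as $b(aR_a)$, this is the claimed identity; and, being $R_{b^{-1}c}$, the set is again an inversion set and hence satisfies the conditions of Lemma \ref{criteria}.

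The hard part, I expect, is not the Boolean algebra above but the bookkeeping: one must keep the composition and reversal conventions of Remark \ref{warning} strictly consistent through the several applications of Thurston's formulas — a misplaced $\pi\leftrightarrow\pi^{-1}$ or a re-associated product wrecks the computation — and one must justify cleanly the reduction $R_c=aR_a\cap\neg R_b$, which is the only place the hypothesis is used and which relies on the paper's (somewhat informal) reading of $\wedge$. As a sanity check, the worked example of Figure \ref{figCD1} gives $R_a\asymp R_b=R_a\cap a^{-1}R_b=\{(2,6),(3,4),(3,6),(4,6),(5,6)\}$ and $(R_a\bowtie R_b)^{*}=bR_b\cup baR_a=\{(1,3),(1,4),(1,5),(1,6),(2,3),(2,4),(2,5),(2,6),(3,4),(3,5),(3,6),(4,5),(4,6),(5,6)\}$, confirming both formulas.
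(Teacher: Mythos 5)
Your proof is correct and follows essentially the same route as the paper's: both unwind the definitions of $\asymp$ and $\bowtie$ through Thurston's formula $R_{\pi_1\pi_2}=(\pi_1^{-1}R_{\pi_2})\bigtriangleup R_{\pi_1}$ and observe that the symmetric differences degenerate (to a set difference for $\asymp$, to a disjoint union for $\bowtie$) because the meet is contained in $aR_a$ and in $\neg R_b$. Your write-up is in fact tidier at the two points where the paper is loose --- you identify $(R_a\bowtie R_b)^*$ with $R_{b^{-1}c}$ directly from the definition of $*$ rather than by appealing to the automaton's construction, and your chains $a^{-1}R_c=R_a\setminus a^{-1}R_b$ and $bR_c=b(aR_a)\setminus bR_b$ avoid the typographical slips in the paper's corresponding manipulations.
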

\begin{proof}
Since the braid $R_x = aR_a \wedge \neg R_b$ is not trivial, then, using (\ref{a-b}) and (\ref{Thurstonformulas1}), we get
\[
R_a \asymp R_b = R_{ax} = (a^{-1}R_x)\bigtriangleup R_a,
\]
but since $R_x \subseteq aR_a$ then $a^{-1}R_x \subseteq R_a$ it follows that
\begin{multline*}
R_a \asymp R_b = R_{ax} = (a^{-1}R_x)\bigtriangleup R_a = R_a \setminus a^{-1}R_x = R_a \cap a^{-1}(\neg R_x) = R_a \cap a^{-1} \left(\neg \left(aR_a \cap \neg R_b \right) \right) =\\= R_a \cap a^{-1}\left(a \neg R_a \cup aR_b \right) = (R_a \cap \neg R_a) \cup (R_a \cap aR_b) = R_a \cap a^{-1}R_b,
\end{multline*}
as claimed. Further, from the construction of Thurston's automaton (see Definition \ref{ThurA}) it follows that $(R_a \bowtie R_b)^*$ is obtained from the braid $R_b^*$ by adding (from the left) the braid $R_x$, i.e., $(R_a\bowtie R_b)^* = R_b^*R_x$. However, from the construction of Thurston's automaton (see also Lemma \ref{9.1.11}, taking into account $R_x \subseteq \neg R_b^*$), it follows that $(R_a \bowtie R_b)^*$ is a non-repeating braid, then $(R_a \bowtie R_b)^* = R_{b^{-1}x}$ (see Definition \ref{star}) and, using (\ref{Thurstonformulas1}), we get
\[
(R_a\bowtie R_b)^* = R_{b^{-1}x} = bR_x \bigtriangleup bR_b,
\]
but $R_x \supseteq \neg R_b$ i.e., $R_x \cap R_b = \varnothing$. So we obtain
\begin{multline*}
(R_a\bowtie R_b)^* = R_{b^{-1}x} = bR_x \bigtriangleup bR_b = bR_x \cup bR_b = b(aR_a \cap \neg R_b) \cup R_b = \\ =(baR_a \cup R_b) \cap (b\neg R_b  \cup bR_b) = (baR_a \cup R_b) \cap \Omega = baR_a \cup R_b,
\end{multline*}
as claimed. And finally, since we only used the formulas (\ref{Thurstonformulas1}), all these sets satisfy the conditions of Lemma \ref{criteria}.
\end{proof}

\smallskip

\begin{remark}
The condition $aR_a \wedge \neg R_b \ne R_\varepsilon$ is very important for the above mentioned Proposition, otherwise we would have $R_a \asymp R_b = R_a$ and $(R_a \bowtie R_b)^* = R_b^*$, and it is not hard to see that it cannot be true. Moreover, from the equality $(R_a \bowtie R_b)^* = bR_b \cup baR_a$ it does not follow that $R_a \bowtie R_b = R_b \cup aR_a$, because all strands in the ``braid''  $R_b \cup aR_a$ are numbered with respect to some horizontal line which crosses the braid $bR_b \cup baR_a$, it follows that, in general, the set $R_b \cup aR_a$ does not satisfy the conditions of Lemma \ref{criteria}. For example, let us look at the fig.\ref{figCD1} (the left side). We have
\begin{align*}
& aR_a = \{(1,2),(1,3),(1,4),(1,5),(1,6),(2,4),(2,6),(2,5),(4,5)\},\\
& R_b = \{(1,2),(1,4),(1,5),(1,6),(2,4),(2,6),(3,4),(3,5),(3,6),(5,6)\},
\end{align*}
we see that $(4,5), (5,6) \in R_b \cup aR_a$ but $(4,6) \notin R_b \cup aR_a$, i.e., this set does not satisfy to the condition i) of Lemma \ref{criteria}.
\end{remark}

\section{Combinatorial properties of Thurston's operations}
To understand the rewriting procedure, we have to study the disappearing of common maximal (tail) braid. The following proposition will help us to prove some interesting properties of Thurston's operations.

\smallskip

\begin{definition}[the technique of colored strands] Let $R_{a_1}, \ldots, R_{a_\ell} \in D$ be  non-repeating braids, let us consider the braid $R = R_{a_1}\cdots R_{a_\ell}$. Let us color all strands of this braid in different colors $s,s', \ldots, s''$. We will say that the colored strands $s, s',s''$ lay in the braid $R$, and the notation $s,s',\ldots, s'' \in R$ will mean that the braid $R$ contains the corresponding colored strands. We will also say ``the strand $s'$'' instead of ``the strand is colored in the color $s'$''.

\smallskip

\par Let us introduce the following notations
\begin{align*}
&s \asymp s'\mathrm{mod}(R_{a_i}) \mbox{ means that strands $s$ and $s'$ do not cross in the non-repeating braid } R_{a_i},\\
&s \bowtie s' \mathrm{mod}(R_{a_i}) \mbox{ means that strands $s$ and $s'$ cross in the non-repeating braid } R_{a_i},
\end{align*}
here $1 \le i \le \ell$.

\par Since the rewriting procedure preserves the number of crossings then it follows that the phrase ``two strands $s$, $s'$ cross (or do not cross) in a braid which is made by rewriting'' has a well-defined meaning.
\end{definition}

\smallskip

\par The following example may help to understand this concept.

\smallskip

\begin{example}
Let us consider the braid $R = R_aR_bR_c$ which is showed in fig.\ref{dem2},
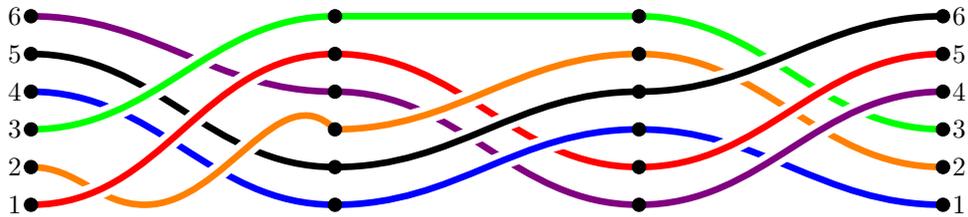
\begin{figure}[h!]
\begin{center}
\begin{tikzpicture}
\draw [line width = 2.5,violet] (0,0) to [out = 0, in = 180] (4,-1);
\draw [line width = 2.5] (0,-0.5) to [out = 0, in = 180] (4,-2);
\draw [line width = 7,white] (0,-1) to [out = 0, in = 180] (4,-2.5);
\draw [line width = 2.5,blue] (0,-1) to [out = 0, in = 180] (4,-2.5);
\draw [line width = 7,white] (0,-1.5) to [out = 0, in = 180] (4,0);
\draw [line width = 2.5,green] (0,-1.5) to [out = 0, in = 180] (4,0);
\draw [line width = 7,white] (0,-2) to [out = 0, in = 180] (1.5,-2.5) to [out = 0 in = 180] (4,-1.5);
\draw [line width = 2.5,orange] (0,-2) to [out = 0, in = 180] (1.5,-2.5) to [out = 0 in = 180] (4,-1.5);
\draw [line width = 7,white] (0,-2.5) to [out = 0, in = 180] (4,-0.5);
\draw [line width = 2.5,red] (0,-2.5) to [out = 0, in = 180] (4,-0.5);
\draw [line width = 2.5,green] (4,0) to [out = 0, in = 180] (8,0);
\draw [line width = 2.5,red] (4,-0.5) to [out = 0, in = 180] (8,-2);
\draw [line width = 2.5,violet] (4,-1) to [out = 0, in = 180] (8,-2.5);
\draw [line width = 7,white] (4,-1.5) to [out = 0, in = 180] (8,-0.5);
\draw [line width = 2.5,orange] (4,-1.5) to [out = 0, in = 180] (8,-0.5);
\draw [line width = 7,white] (4,-2) to [out = 0, in = 180] (8,-1);
\draw [line width = 2.5,black] (4,-2) to [out = 0, in = 180] (8,-1);
\draw [line width = 7,white] (4,-2.5) to [out = 0, in = 180] (8,-1.5);
\draw [line width = 2.5,blue] (4,-2.5) to [out = 0, in = 180] (8,-1.5);
\draw [line width = 2.5,green] (8,0) to [out = 0, in = 180] (12,-1.5);
\draw [line width = 2.5,orange] (8,-0.5) to [out = 0, in = 180] (12,-2);
\draw [line width = 7,white] (8,-1) to [out = 0, in = 180] (12,0);
\draw [line width = 2.5,black] (8,-1) to [out = 0, in = 180] (12,0);
\draw [line width = 2.5,blue] (8,-1.5) to [out = 0, in = 180] (12,-2.5);
\draw [line width = 7,white] (8,-2) to [out = 0, in = 180] (12,-0.5);
\draw [line width = 2.5,red] (8,-2) to [out = 0, in = 180] (12,-0.5);
\draw [line width = 7,white] (8,-2.5) to [out = 0, in = 180] (12,-1);
\draw [line width = 2.5,violet] (8,-2.5) to [out = 0, in = 180] (12,-1);
\draw [fill](0,0) circle (2.5pt);
\draw [fill](0,-0.5) circle (2.5pt);
\draw [fill](0,-1) circle (2.5pt);
\draw [fill](0,-1.5) circle (2.5pt);
\draw [fill](0,-2) circle (2.5pt);
\draw [fill](0,-2.5) circle (2.5pt);
\draw [fill](4,0) circle (2.5pt);
\draw [fill](4,-0.5) circle (2.5pt);
\draw [fill](4,-1) circle (2.5pt);
\draw [fill](4,-1.5) circle (2.5pt);
\draw [fill](4,-2) circle (2.5pt);
\draw [fill](4,-2.5) circle (2.5pt);
\draw [fill](8,0) circle (2.5pt);
\draw [fill](8,-0.5) circle (2.5pt);
\draw [fill](8,-1) circle (2.5pt);
\draw [fill](8,-1.5) circle (2.5pt);
\draw [fill](8,-2) circle (2.5pt);
\draw [fill](8,-2.5) circle (2.5pt);
\draw [fill](12,0) circle (2.5pt);
\draw [fill](12,-0.5) circle (2.5pt);
\draw [fill](12,-1) circle (2.5pt);
\draw [fill](12,-1.5) circle (2.5pt);
\draw [fill](12,-2) circle (2.5pt);
\draw [fill](12,-2.5) circle (2.5pt);
\node[left] at (0,0){$6$};
\node[left] at (0,-0.5){$5$};
\node[left] at (0,-1){$4$};
\node[left] at (0,-1.5){$3$};
\node[left] at (0,-2){$2$};
\node[left] at (0,-2.5){$1$};
\node[right] at (12,0){$6$};
\node[right] at (12,-0.5){$5$};
\node[right] at (12,-1){$4$};
\node[right] at (12,-1.5){$3$};
\node[right] at (12,-2){$2$};
\node[right] at (12,-2.5){$1$};
\end{tikzpicture}
\end{center}
\caption{Here is shown the braid $R$ which is decomposed as $R = R_aR_bR_c$. We start on the left.}\label{dem2}
\end{figure}
here are three permutations
\[
a = \begin{pmatrix}1&2&3&4&5&6\\5&3&6&1&2&4 \end{pmatrix}, \quad b = \begin{pmatrix}1&2&3&4&5&6\\3&4&5&1&2&6 \end{pmatrix}, \quad c = \begin{pmatrix}1&2&3&4&5&6\\4&5&1&6&2&4 \end{pmatrix}
\]
Let us color all its strands in the following way
\begin{gather*}
\mbox{first strand} \mapsto \mbox{red}, \quad \mbox{second strand} \mapsto \mbox{orange}, \quad \mbox{third strand} \mapsto \mbox{green},\\
\mbox{forth strand} \mapsto \mbox{blue}, \quad \mbox{fifth strand} \mapsto \mbox{black}, \quad \mbox{sixth strand} \mapsto \mbox{violet},
\end{gather*}
and let us say that the braid $R$ contains the red, orange, green, blue, black and violet strands. We set
\[
x_{ij}(y) = \begin{cases}1, \mbox{ if the colored strands of i-th and j-th color are crossed in } R_y, \\ 0, \mbox{ otherwise,}  \end{cases}
\]
for any $1 \le i,j \le 6$ and $y \in \{a,b,c\}.$ Then, the crossing of these strands can be described via the following tableau of triples of the form $(x_{ij}(a), x_{ij}(b),x_{ij}(c))$.
\begin{center}
\begin{tabular}{|c|c|c|c|c|c|c|}
\hline
 & red & orange & green & blue & black & violet\\
\hline
red & $(0,0,0)$ & $(1,1,1)$ & $(0,0,1)$ & $(1,1,1)$ & $(1,1,0)$ & $(1,0,0)$\\
\hline
orange & $(1,1,1)$ & $(0,0,0)$ & $(0,0,0)$ & $(1,0,0)$ & $(1,0,1)$ & $(0,1,1)$\\
\hline
green & $(0,0,1)$ & $(0,0,0)$ & $(0,0,0)$ & $(1,0,0)$ & $(1,0,1)$ & $(1,0,1)$\\
\hline
blue & $(1,1,1)$ & $(1,0,1)$ & $(1,0,0)$ & $(0,0,0)$ & $(0,0,0)$ & $(0,1,1)$\\
\hline
black & $(1,1,0)$ & $(1,0,1)$ & $(1,0,1)$ & $(0,0,0)$ & $(0,0,0)$ & $(0,1,0)$\\
\hline
violet & $(1,0,1)$ & $(0,1,1)$ &$(1,0,1)$ &$(0,1,1)$ & $(0,1,0)$ & $(0,0,0)$\\
\hline
\end{tabular}
\end{center}
\end{example}

\smallskip

\par The following lemma describes how the technique of colored strands works.

\smallskip

\begin{lemma}\label{strands}
Let $R_a$ and $R_b$ be non-repeating braids, and let us assume that $aR_a \wedge \neg R_b \ne R_\varepsilon$. Let $s$ and $s'$ be two strands of the braid $R_aR_b$, then we have the following corollaries;
\begin{align*}
&s \asymp s' \mathrm{mod}(a \asymp b) \Longleftrightarrow s \asymp s' \mathrm{mod}(a) \mbox{ or } \begin{cases} s \bowtie s' \mathrm{mod}(a)\\ s \asymp s' \mathrm{mod}(b) \end{cases}\\
&s \bowtie s' \mathrm{mod}(a \asymp b) \Longleftrightarrow \begin{cases}s \bowtie s' \mathrm{mod}(a)\\s \bowtie s' \mathrm{mod}(b) \end{cases}\\
&s \asymp s' \mathrm{mod}(a \bowtie b) \Longleftrightarrow \begin{cases} s \asymp s' \mathrm{mod}(a)\\ s \asymp s' \mathrm{mod}(b) \end{cases}\\
&s \bowtie s' \mathrm{mod}(a \bowtie b) \Longleftrightarrow s \bowtie s'\mathrm{mod}(b)  \mbox{ or } \begin{cases} s \bowtie s' \mathrm{mod}(a)\\ s \asymp s' \mathrm{mod}(b). \end{cases}
\end{align*}
\end{lemma}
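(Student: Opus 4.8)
The plan is to translate each of the four biconditionals into a statement about pairs $(i,k) \in \{1,\dots,n\}^2$ via the $R_\pi$-description, and then verify it by a direct set-theoretic computation using the formulas $(\ref{a-b})$, $(\ref{a+b})$, and the identities $(\ref{Thurstonformulas1})$. First I would fix notation: let $x := aR_a \wedge \neg R_b$ (non-trivial by hypothesis), so that by Definition~\ref{con1} we have $R_a \asymp R_b = R_{ax}$ and $(R_a \bowtie R_b)^* = R_{b^{-1}x}$, equivalently $R_a \bowtie R_b = R_{x^{-1}b}$. Two strands $s,s'$ of $R_aR_b$ that enter the middle slice at positions $i < k$ correspond to the pair $(i,k)$; then ``$s \bowtie s' \bmod(a)$'' means $(i,k) \in aR_a = R_a^*$ (crossings of $R_a$ read from its \emph{bottom} boundary, which is the relevant boundary since $s,s'$ are indexed at the top of $R_b$), ``$s \bowtie s' \bmod(b)$'' means $(i,k)\in R_b$, ``$s \bowtie s' \bmod(a\asymp b)$'' means $(i,k) \in (R_a\asymp R_b)^* = \neg R_x \cap \cdots$ — more precisely the bottom-boundary crossing set of $R_a \asymp R_b$ — and ``$s\bowtie s'\bmod(a\bowtie b)$'' means $(i,k) \in R_b^* x$-type set. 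The bookkeeping of \emph{which} boundary each crossing set is read from is the crux; I would state this explicitly as a preliminary paragraph so the subsequent computations are unambiguous.

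**The two ``conjunctive'' identities.** The cleanest starting points are the second and third biconditionals, which say $s \bowtie s' \bmod(a\asymp b)$ iff ($s\bowtie s'\bmod a$ and $s\bowtie s'\bmod b$), and dually $s\asymp s'\bmod(a\bowtie b)$ iff ($s\asymp s'\bmod a$ and $s\asymp s'\bmod b$). For the second one I would use the Proposition already proved in the excerpt: $R_a\asymp R_b = R_a \cap a^{-1}R_b$. Reading crossings of $R_a\asymp R_b$ from the appropriate boundary, a pair is a crossing of $R_a\asymp R_b$ exactly when it lies in (the $*$-image of) $R_a\cap a^{-1}R_b$, and pushing the $*$ through one checks this is the intersection of the crossing set of $R_a$ (from its bottom) with the crossing set of $R_b$ — this is precisely ``crosses in $a$ AND crosses in $b$.'' The third biconditional is the formal complement of the second: ``$\asymp \bmod(a\bowtie b)$'' is the negation of ``$\bowtie\bmod(a\bowtie b)$'', and using $(R_a\bowtie R_b)^* = bR_b \cup baR_a$ from the Proposition, the crossing set of $R_a\bowtie R_b$ is a union, so its complement is an intersection of complements — i.e. ``doesn't cross in $a$ AND doesn't cross in $b$.'' These two cases should go through with only the boundary-tracking care mentioned above.

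**The two ``disjunctive'' identities, and what I'd watch out for.** The first and fourth biconditionals are the genuinely three-branch statements; I would obtain them by taking complements of the second and third within the full strand set, being careful that \emph{not every} pair $(i,k)$ is a valid ``strand pair'' in the relevant slice — only those with $i<k$ — but that is automatic from the $R_\pi$ conventions. Concretely: $s\asymp s'\bmod(a\asymp b)$ is the negation of $s\bowtie s'\bmod(a\asymp b)$, which by the second identity is the negation of ``$\bowtie a$ and $\bowtie b$'', i.e. ``$\asymp a$ or $\asymp b$''. To match the stated form ``$\asymp a$ OR ($\bowtie a$ and $\asymp b$)'' one just notes these two disjunctions are logically equal: ``$P$ or $Q$'' $\equiv$ ``$P$ or ($\neg P$ and $Q$)''. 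Similarly the fourth follows from the third by negation, with ``$\bowtie b$ or ($\bowtie a$ and $\asymp b$)'' being the rewriting of ``$\bowtie a$ or $\bowtie b$''. So once the conjunctive pair is established, the disjunctive pair is essentially Boolean algebra.

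**Main obstacle.** I expect the only real difficulty — and the part deserving the most care in the writeup — is the boundary-indexing issue: every crossing set in sight ($R_a$, $R_b$, $R_a\asymp R_b$, $R_a\bowtie R_b$) must be read from the correct horizontal line (top vs. bottom of the relevant block), and the operations $*$, $\neg$, and the action $\pi R_\pi$ precisely encode the change of reference line. The Remark following the Proposition explicitly warns that naive set unions like $R_b \cup aR_a$ need not even satisfy Lemma~\ref{criteria} because of exactly this indexing subtlety, so I would set up once and for all a lemma saying ``the crossing set of the block $R_a R_b$ at the dividing line, for strands entering at the top of $R_b$, is $aR_a = R_a^*$,'' and then invoke it mechanically. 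With that in hand, all four biconditionals reduce to manipulations of symmetric differences and intersections that are routine given $(\ref{Thurstonformulas1})$; I would present the $\bowtie\bmod(a\asymp b)$ case in full and remark that the other three follow by the same computation together with De Morgan.
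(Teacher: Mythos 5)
Your plan is correct, but it is considerably more of a proof than the paper itself supplies: the paper's entire argument for Lemma~\ref{strands} is the single sentence that the claim ``immediately follows from the construction of Thurston's automaton (see Definition~\ref{ThurA})''. What you do differently is to route everything through the set-theoretic Proposition ($R_a \asymp R_b = R_a \cap a^{-1}R_b$ and $(R_a\bowtie R_b)^* = bR_b \cup baR_a$), read off the two conjunctive equivalences from the intersection and the union respectively --- after the reindexing you rightly flag, since $a^{-1}R_b$ and $baR_a$ are exactly the crossing sets of $R_b$ and $R_a$ transported to the reference line of the block in question --- and then obtain the two disjunctive equivalences by pure Boolean complementation, using $P \vee Q \equiv P \vee (\neg P \wedge Q)$ to match the stated three-branch form. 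This buys a checkable argument where the paper offers only an appeal to the operational picture, at the cost of inheriting whatever the Proposition assumes: both your derivation and that Proposition silently rest on the identification of the meet $aR_a \wedge \neg R_b$ with the plain intersection $aR_a \cap \neg R_b$ under the hypothesis $aR_a \wedge \neg R_b \ne R_\varepsilon$, and this is where the real content of the lemma lives (the transferred tail must consist of \emph{all} crossings of $R_a$ between strands that do not cross in $R_b$, or else ``crosses in $a\asymp b$'' would not reduce to ``crosses in $a$ and crosses in $b$''; note that once this is granted, the union $bR_b\cup baR_a$ automatically absorbs the non-transferred crossings of $R_a$, which is why the $\bowtie$ case comes out as a clean disjunction). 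Since the paper states and uses the Proposition under exactly the same hypothesis as the lemma, your reliance on it is legitimate, and your writeup would close a gap the paper leaves open rather than introduce a new one.
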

\begin{proof} The proof is immediately follows from the construction of Thurston's automaton (see Definition \ref{ThurA}).
\end{proof}

\smallskip

\begin{proposition}\label{stop} 
For any three non-repeating braids $R_a$, $R_b$ and $R_c$ we have
\begin{align}
&R_a^* \wedge \neg R_b = R_\varepsilon \Longrightarrow (R_a \bowtie (R_b \asymp R_c))^* \wedge \neg (R_b \bowtie R_c) = R_\varepsilon,\label{stop1}\\
&R_b^* \wedge \neg R_c = R_\varepsilon \Longrightarrow (R_a \asymp R_b)^* \wedge \neg ((R_a \bowtie R_b)\asymp R_c) = R_\varepsilon,\label{stop2}\\
&(R_a \asymp (R_b \asymp R_c))^*\wedge \neg \left((R_a \bowtie (R_b \asymp R_c)) \asymp (R_b \bowtie R_c) \right) = R_\varepsilon,\label{stop3}\\
&((R_{a} \asymp R_b) \bowtie ((R_a \bowtie R_b) \asymp R_c))^* \wedge \neg  ((R_a \bowtie R_b) \bowtie R_c) = R_\varepsilon.\label{stop4}
\end{align}
\end{proposition}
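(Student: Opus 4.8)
The plan is to reduce all four implications to a single statement about the rewriting process, using the technique of colored strands from Lemma~\ref{strands} together with the characterization of the right--greedy normal form. The key observation is that the left--hand sides of \eqref{stop1}--\eqref{stop4} are exactly the ``greedy'' conditions for the successive factors of the rewriting of the word $R_aR_bR_c$ into right--greedy normal form; the identities simply assert that once two adjacent factors stop exchanging crossings, the corresponding factors produced after one more rewriting step across a neighboring pair still stop exchanging crossings. Since by Lemma~\ref{9.1.11} and Definition~\ref{ThurA} the condition $R_x^* \wedge \neg R_y = R_\varepsilon$ is equivalent to ``every pair of strands adjacent at the common boundary of $R_x$ and $R_y$ that crosses in $R_x$ also crosses in $R_y$,'' it suffices to track, for an arbitrary pair of colored strands $s,s'$, whether they cross in each of the factors before and after rewriting.

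First I would set up the notation: write the six braids appearing in \eqref{stop4} explicitly as $R_{a'}=R_a\asymp R_b$, $R_{b'}=R_a\bowtie R_b$, then $R_{b''}=R_{b'}\asymp R_c$, $R_{c'}=R_{b'}\bowtie R_c$, so that the claim becomes $(R_{a'}\bowtie R_{b''})^*\wedge \neg R_{c'}=R_\varepsilon$, and similarly name the intermediate braids in \eqref{stop1}--\eqref{stop3}. Then, for each identity, I would take a pair of strands $s,s'$ adjacent at the relevant common boundary, assume they cross in the ``upper'' factor, and chase through the equivalences of Lemma~\ref{strands} — applied repeatedly, once for each $\asymp$/$\bowtie$ appearing — to conclude that they must cross in the ``lower'' factor. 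The hypotheses $R_a^*\wedge\neg R_b=R_\varepsilon$ in \eqref{stop1} and $R_b^*\wedge\neg R_c=R_\varepsilon$ in \eqref{stop2} feed in as: ``$s\bowtie s'\bmod(a)$ implies $s\bowtie s'\bmod(b)$'' and ``$s\bowtie s'\bmod(b)$ implies $s\bowtie s'\bmod(c)$'' respectively, for strands adjacent at the corresponding boundary. For \eqref{stop3} and \eqref{stop4} no extra hypothesis is needed because the rewriting itself, by construction, pushes the maximal common tail forward, so the required crossing implication holds automatically — this is precisely the content of the ``$\Longleftrightarrow$'' in the first and last lines of Lemma~\ref{strands}.

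The main obstacle is bookkeeping rather than conceptual: one must be careful that the notion ``$s$ and $s'$ are adjacent at a given boundary'' is preserved as we pass from one rewriting step to the next, since the operations $\asymp$ and $\bowtie$ permute the strand endpoints. Here I would invoke the remark in the definition of the colored--strand technique — that the rewriting procedure preserves the number of crossings and hence the phrase ``$s$ and $s'$ cross in a braid obtained by rewriting'' is well defined — so that the crossing relations $s\asymp s'\bmod(\cdot)$ and $s\bowtie s'\bmod(\cdot)$ are intrinsic to the (unordered) pair of colored strands and do not depend on which horizontal line we read the numbering from. Once this is granted, each of the four implications follows by a short finite case analysis over the three/four crossing states of $s,s'$ in the original factors $R_a,R_b,R_c$, combining the biconditionals of Lemma~\ref{strands}; I expect \eqref{stop4} to require the longest chase (three nested applications), but each case is forced, so no genuine choice arises. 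I would present \eqref{stop3} in full as the model computation and indicate that \eqref{stop1}, \eqref{stop2}, \eqref{stop4} are analogous, with the stated hypotheses supplying the one extra crossing implication where needed.
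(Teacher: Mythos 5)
Your proposal follows essentially the same route as the paper: both reduce each identity to a Boolean case analysis on the crossing states of a pair of colored strands in $R_a$, $R_b$, $R_c$, propagated through the operations $\asymp$ and $\bowtie$ via the equivalences of Lemma~\ref{strands}, with the hypotheses of \eqref{stop1}--\eqref{stop2} entering exactly as the crossing implications you describe and \eqref{stop3}--\eqref{stop4} needing no hypothesis. The paper simply carries out the four chases explicitly (and, like you, treats the boundary/adjacency bookkeeping only informally via the well-definedness of crossings of colored strands), so what remains is to write out the case analyses you have outlined.
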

\begin{proof} Let us consider some strands $s$ and $s'$ of the braid $R_aR_bR_c$. To prove this proposition, we will use the machinery of colored strands and Lemma \ref{strands}.

\smallskip

\par i) Let $R^* \wedge \neg R_b = R_\varepsilon$. Let us consider two strands $s,s'$ such that $(s,s') \in (R_a \bowtie (R_b \asymp R_c))^* \cap \neg (R_b \bowtie R_c)$, i.e., $s \bowtie s' \bmod(a \bowtie (b \asymp c))$ and $s \asymp s' \bmod(b \bowtie c)$, we have
\begin{multline*}
s \bowtie s' \bmod(a \bowtie (b \asymp c)) \Longleftrightarrow s \bowtie s' \bmod (b \asymp c) \mbox{ or } \begin{cases} s \bowtie s' \bmod (a)\\s \asymp s' \bmod(b \asymp c)\end{cases} \Longleftrightarrow \\ \Longleftrightarrow \begin{cases} s \bowtie s'\bmod (b)\\ s \bowtie s' \bmod(c) \end{cases} \mbox{ or } \begin{cases} s \bowtie s' \bmod(a)\\ s \asymp s' \bmod(b) \mbox{ or } \begin{cases}s \bowtie s' \bmod(b)\\ s \asymp s' \bmod(c). \end{cases} \end{cases}
\end{multline*}
on the other hand, we have
\[
s \asymp s' \bmod(b \bowtie c) \Longleftrightarrow \begin{cases}s \asymp s' \bmod(b)\\s \asymp s' \bmod(c) \end{cases}
\]
if we assume that $R_a^* \cap \neg R_b = \varnothing$ then $(R_a \bowtie (R_b \asymp R_c))^* \cap \neg (R_b \bowtie R_c) = \varnothing$. Otherwise, we have to put  $\begin{cases} s \bowtie s' \bmod(a) \\ s \asymp s' \bmod(b)\\s \asymp s' \bmod(c)\end{cases}$ but this means that
\[
(R_a \bowtie (R_b \asymp R_c))^* \wedge \neg (R_b \bowtie R_c) = R_a^* \wedge \neg R_b = R_\varepsilon,
\]
as claimed.

\smallskip

\par ii) Let $R_b^* \wedge \neg R_c = R_\varepsilon$. Let us consider two strands $(s,s')\in (R_a \asymp R_b)^* \cap \neg ((R_a \bowtie R_b)\asymp R_c)$, we have
\[
s \bowtie s' \bmod(a \asymp b) \Longleftrightarrow \begin{cases}s \bowtie s' \bmod(a)\\ s \bowtie s'\bmod(b) \end{cases}
\]
on the other hand,
\begin{multline*}
s \asymp s' \bmod((a \bowtie b) \asymp c) \Longleftrightarrow s \asymp s' \bmod(a \bowtie b) \mbox{ or } \begin{cases} s \bowtie s' \bmod(a \bowtie b)\\ s \asymp s' \bmod(c) \end{cases} \Longleftrightarrow \\ \Longleftrightarrow \begin{cases} s \asymp s' \bmod(a) \\ s \asymp s' \bmod(b) \end{cases} \mbox{ or } \begin{cases} s \bowtie s' \bmod(b) \mbox{ or } \begin{cases}s \bowtie s' \bmod(a)\\s \asymp s' \bmod(b) \end{cases}\\s\asymp s' \bmod(c) \end{cases}
\end{multline*}
it is not hard to see that if we assume that $R_b^* \cap \neg R_c = \varnothing$ then $(R_a \asymp R_b)^* \cap \neg ((R_a \bowtie R_b)\asymp R_c) = \varnothing$. Otherwise, we have to put  $\begin{cases}s\bowtie s' \bmod(b)\\s \asymp s' \bmod(c) \end{cases}$  it follows that
\[
(R_a \asymp R_b)^* \wedge \neg ((R_a \bowtie R_b)\asymp R_c) = R_b^* \wedge \neg R_c = R_\varepsilon,
\]
as claimed.

\smallskip

\par iii) Let us assume that $s \bowtie s' \mathrm{mod}(a \asymp (b \asymp c))$ and $s \asymp s' \mathrm{mod}((a \bowtie (b \asymp c))\asymp (b \bowtie c))$, we have
$$
s \bowtie s' \mathrm{mod}(a \asymp (b \asymp c)) \Longleftrightarrow \begin{cases} s \bowtie s' \mathrm{mod}(a)\\s \bowtie s' \mathrm{mod}(b)\\s \bowtie s' \mathrm{mod}(c) \end{cases}
$$
and
\begin{multline*}
s \asymp s' \mathrm{mod}((a \bowtie (b \asymp c))\asymp (b \bowtie c)) \Longleftrightarrow s \asymp s' \mathrm{mod}(a \bowtie (b \asymp c)) \mbox{ or } \begin{cases} s \bowtie s' \mathrm{mod}(a \bowtie (b \asymp c)) \\ s \asymp s' \mathrm{mod}(b \bowtie c) \end{cases} \Longleftrightarrow \\ \Longleftrightarrow \begin{cases}s \asymp s' \mathrm{mod}(a) \\ s \asymp s' \mathrm{mod}(b \asymp c)   \end{cases} \mbox{ or } \begin{cases} s \bowtie s' \mathrm{mod}(b \asymp c) \mbox{ or } \begin{cases}s \bowtie s' \mathrm{mod}(a)\\ s \asymp s' \mathrm{mod}(b \asymp c)  \end{cases}\\ \begin{cases}s \asymp s' \mathrm{mod}(b) \\ s \asymp s' \mathrm{mod}(c)  \end{cases}  \end{cases} \Longleftrightarrow \\ \Longleftrightarrow \begin{cases} s \asymp s' \mathrm{mod}(a)\\ s \asymp s' \mathrm{mod}(b) \mbox{ or }  \begin{cases} s \bowtie s' \mathrm{mod}(b)\\ s \asymp s' \mathrm{mod}(c) \end{cases}  \end{cases} \mbox{ or } \begin{cases} \begin{cases} s \bowtie s' \mathrm{mod}(b) \\ s \bowtie s'\mathrm{mod}(c)  \end{cases} \mbox{ or } \begin{cases}s \bowtie s'\mathrm{mod}(a) \\ s \asymp s' \mathrm{mod}(b) \mbox{ or } \begin{cases} s \bowtie s' \mathrm{mod}(b)\\ s \asymp s' \mathrm{mod}(c) \end{cases} \end{cases} \\ \begin{cases} s \asymp s' \mathrm{mod}(b) \\ s \asymp s' \mathrm{mod}(c).  \end{cases} \end{cases}
\end{multline*}
\par It is not hard to see that we get the following corollary
$$
\begin{cases} \begin{cases} s \bowtie s' \mathrm{mod}(b) \\ s \bowtie s'\mathrm{mod}(c)  \end{cases} \mbox{ or } \begin{cases}s \bowtie s'\mathrm{mod}(a) \\ s \asymp s' \mathrm{mod}(b) \mbox{ or } \begin{cases} s \bowtie s' \mathrm{mod}(b)\\ s \asymp s' \mathrm{mod}(c) \end{cases}   \end{cases} \\ \begin{cases} s \asymp s' \mathrm{mod}(b), \\ s \asymp s' \mathrm{mod}(c)  \end{cases} \end{cases} \Longleftrightarrow s \asymp s' \mathrm{mod}(c),
$$
this means that $(a \asymp (b \asymp c))R_{a \asymp (b \asymp c)} \cap \neg R_{(a \bowtie (b \asymp c)) \asymp (b \bowtie c)} = \varnothing$, as claimed.

\smallskip

\par iv) Let us consider now the braid $(R_{a} \asymp R_b) \bowtie ((R_a \bowtie R_b) \asymp R_c)$, and let us assume that for some strands $s,$ $s'$ of the braid $R_aR_bR_c$ we have $s \bowtie s' \mathrm{mod}((a \asymp b)\bowtie ((a \bowtie b)\asymp c))$, we have
\begin{multline*}
s \bowtie s' \mathrm{mod}((a \asymp b)\bowtie ((a \bowtie b)\asymp c)) \Longleftrightarrow
s \bowtie s' \mathrm{mod}((a \bowtie b)\asymp c)) \mbox{ or } \begin{cases} s \bowtie s' \mathrm{mod}(a \asymp b)\\ s \asymp s' \mathrm{mod}((a \bowtie b)\asymp c)) \end{cases} \Longleftrightarrow \\ \Longleftrightarrow  \begin{cases} s \bowtie s' \mathrm{mod}(a \bowtie b)\\ s \bowtie s' \mathrm{mod}(c) \end{cases}  \mbox{ or } \begin{cases} \begin{cases} s \bowtie s' \mathrm{mod}(a)\\ s \bowtie s' \mathrm{mod}(b) \end{cases}\\ \left(s \asymp s' \mathrm{mod}(a \bowtie b)\right) \mbox{ or }\begin{cases} s \bowtie s' \mathrm{mod}(a \bowtie b)\\s \asymp s' \mathrm{mod}(c) \end{cases} \end{cases} \Longleftrightarrow \\ \Longleftrightarrow \begin{cases} s \bowtie s' \mathrm{mod}(b) \mbox{ or } \begin{cases} s \bowtie s' \mathrm{mod}(a)\\ s \asymp s'\mathrm{mod}(b)  \end{cases}\\ s \bowtie s'\mathrm{mod}(c) \end{cases} \mbox{ or } \begin{cases} \begin{cases}s \bowtie s' \mathrm{mod}(a)\\ s \bowtie s' \mathrm{mod}(b) \end{cases}\\ \begin{cases}s \asymp s' \mathrm{mod}(a)\\ s \asymp s' \mathrm{mod}(b)  \end{cases} \mbox{ or } \begin{cases} s \bowtie s' \mathrm{mod}(b) \mbox{ or } \begin{cases} s \bowtie s' \mathrm{mod}(a)\\ s \asymp s'\mathrm{mod}(b) \end{cases} \\ s \asymp s'\mathrm{mod}(c) \end{cases} \end{cases}
\end{multline*}
it is not hard to see that
$$
\begin{cases} \begin{cases}s \bowtie s' \mathrm{mod}(a)\\ s \bowtie s' \mathrm{mod}(b) \end{cases}\\ \begin{cases}s \asymp s' \mathrm{mod}(a)\\ s \asymp s' \mathrm{mod}(b)  \end{cases} \mbox{ or } \begin{cases} s \bowtie s' \mathrm{mod}(b) \mbox{ or } \begin{cases} s \bowtie s' \mathrm{mod}(a)\\ s \asymp s'\mathrm{mod}(b) \end{cases} \\ s \asymp s'\mathrm{mod}(c) \end{cases} \end{cases} \Longleftrightarrow \begin{cases} s \bowtie s' \mathrm{mod}(a)\\ s \bowtie s'\mathrm{mod}(b)\\ s \asymp s' \mathrm{mod}(c)  \end{cases}
$$
i.e., we have
\begin{multline*}
s \bowtie s' \mathrm{mod}((a \asymp b) \bowtie ((a \bowtie b) \asymp c)) \Longleftrightarrow \\ \Longleftrightarrow  \begin{cases} s \bowtie s' \mathrm{mod}(b) \mbox{ or } \begin{cases} s \bowtie s' \mathrm{mod}(a)\\ s \asymp s'\mathrm{mod}(b)  \end{cases}\\ s \bowtie s'\mathrm{mod}(c) \end{cases} \mbox{ or } \begin{cases} s \bowtie s' \mathrm{mod}(a)\\ s \bowtie s'\mathrm{mod}(b)\\ s \asymp s' \mathrm{mod}(c).  \end{cases}
\end{multline*}

\par On the other hand, we have
$$
s \asymp s' \mathrm{mod}((a \bowtie b) \bowtie c) \Longleftrightarrow \begin{cases} s \asymp s' \mathrm{mod}(a)\\s \asymp s' \mathrm{mod}(b) \\ s \asymp s' \mathrm{mod}(c), \end{cases}
$$
it follows that $((a \asymp b) \bowtie ((a \bowtie b) \asymp c))R_{(a \asymp b) \bowtie ((a \bowtie b) \asymp c)} \cap \neg R_{(a \bowtie b) \bowtie c} = \varnothing,$ as claimed.
\end{proof}

\smallskip

\begin{theorem}\label{GSB} The triple $(D,\asymp,\bowtie)$ with binary operations $\asymp$ and $\bowtie$ satisfies the following equations for any non-repeating braids $R_a,R_b,R_c \in D$:
\begin{align}
&R_a \asymp R_a = R_a, \quad R_a \bowtie R_a = R_a\\
& R_a \asymp R_b = R_a \Longleftrightarrow R_a \bowtie R_b = R_b\\
&R_a \asymp (R_a \bowtie R_b) = R_a = R_a \bowtie (R_a \bowtie R_b),\quad (R_a \asymp R_b)\asymp R_b = R_b = (R_a \asymp R_b)\bowtie R_b\\
&R_a \asymp (R_b \asymp R_c) = (R_a \asymp R_b) \asymp ((R_a \bowtie R_b) \asymp R_c),\label{l}\\
&(R_a \bowtie (R_b \asymp R_c)) \asymp (R_b \bowtie R_c) = (R_a \asymp R_b) \bowtie ((R_a \bowtie R_b) \asymp R_c),\label{m}\\
&(R_a \bowtie (R_b \asymp R_c)) \bowtie (R_b \bowtie R_c) = (R_a \bowtie R_b) \bowtie R_c,\label{r}\\
& R_a \asymp R_b = R_b \ne R_a, \Longleftrightarrow R_a = R_{xb}, \mbox{ where } xb = bx,\,b^2 = 1 \mbox{ in the permutation group.}
\end{align}
\end{theorem}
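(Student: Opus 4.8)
The plan is to first translate $R_a\asymp R_b=R_b$ (with $R_a\ne R_b$) into a statement about inversion sets, and then to classify the permutations meeting it.

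\textbf{Reduction.} Since $R_a\ne R_b$, the braid $aR_a\wedge\neg R_b$ cannot be $R_\varepsilon$ (otherwise Definition \ref{con1} would give $R_a\asymp R_b=R_a\ne R_b$), so the set-theoretic description of $\asymp$ from the Proposition applies: $R_a\asymp R_b=R_a\cap a^{-1}R_b$. Comparing with $R_b$ and using that $a^{-1}$ permutes pairs (so $|a^{-1}R_b|=|R_b|$) forces
\[
R_b\subseteq R_a\qquad\text{and}\qquad aR_b=R_b .
\]
The converse is immediate: if $a\ne b$, $R_b\subseteq R_a$, $aR_b=R_b$, then by Thurston's formula \eqref{Thurstonformulas1} we have $R_{a^{-1}b}=(aR_b)\bigtriangleup R_{a^{-1}}=R_{a^{-1}}\setminus R_b=aR_a\cap\neg R_b$; being an inversion set, $R_{a^{-1}b}$ automatically satisfies Lemma \ref{criteria}, hence equals the meet $aR_a\wedge\neg R_b$, so that $R_a\asymp R_b=R_{a(a^{-1}b)}=R_b\ne R_a$. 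Thus the left-hand side of the statement is equivalent to: $a\ne b$, $R_b\subseteq R_a$, and $a$ is an automorphism of the inversion graph of $b$ (the graph on $\{1,\dots,n\}$ with edge set $R_b$) crossing each of its edges.

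\textbf{Identifying the permutations.} Put $c:=b^{-1}a$, so $a=bc$; from $R_b\subseteq R_a$ one gets $|R_{c}|=|R_a|-|R_b|$ via \eqref{Thurstonformulas1}, i.e. $R_a=R_cR_b$ in $D$ with $R_b$ a tail. Substituting into $aR_b=R_b$ gives $cR_b=b^{-1}R_b$. The decisive step is that this forces $b^2=1$: if $b^2\ne1$ then $b^{-1}R_b\ne bR_b=R_{b^{-1}}=R_b$, and one derives a contradiction with $|R_a|=|R_b|+|R_c|$ by picking an extreme edge of the inversion graph of $b$ and testing the two incompatible sets $R_b$, $b^{-1}R_b$ against the betweenness condition ii) of Lemma \ref{criteria}. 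Once $b^2=1$, we have $b^{-1}R_b=bR_b=R_{b^{-1}}=R_b$, hence $cR_b=R_b$: $c$ too stabilises the inversion graph of $b$. Finally $bc=cb$: $c$ fixes setwise the support $V_b$ of that graph and its complement, $b$ is trivial on the complement, and on $V_b$ a short case analysis (via Lemma \ref{criteria} and $b^2=1$) gives $bc=cb$ there as well. Thus $a=bc=cb$ with $b^2=1$; putting $x:=c=ab^{-1}$ we get $R_a=R_xR_b$ with $xb=bx$ and $b^2=1$ — the right-hand side.

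\textbf{Converse.} If $R_a=R_xR_b$ in $D$ with $xb=bx$ and $b^2=1$, then the lengths add, so $x=b^{-1}a=c$ and $R_b\subseteq R_a$; from $xb=bx$ and $b^2=1$ one checks directly that $xR_b=R_b$, whence $aR_b=bxR_b=bR_b=R_{b^{-1}}=R_b$, and the Reduction step yields $R_a\asymp R_b=R_b\ne R_a$.

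The only non-formal point — the main obstacle — is the implication ``$R_b\subseteq R_a$ and $aR_b=R_b\Rightarrow b^2=1$'' in the second step; everything else is bookkeeping with the Proposition, Thurston's formula \eqref{Thurstonformulas1} and Lemma \ref{criteria}, plus a little extra attention to the degenerate case $b=\varepsilon$.
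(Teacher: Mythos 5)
There is a major gap of coverage: your proposal proves (at best) only the \emph{last} of the seven assertions of Theorem~\ref{GSB}, the equivalence $R_a \asymp R_b = R_b \ne R_a \Leftrightarrow R_a = R_{xb}$ with $xb=bx$, $b^2=1$. The idempotency laws, the absorption laws $R_a \asymp (R_a \bowtie R_b) = R_a$ etc., and above all the three identities (\ref{l}), (\ref{m}), (\ref{r}) --- which are the substance of the theorem and the input for the Gr\"obner--Shirshov and greedy-normal-form results that follow --- are not addressed at all. The paper proves those by the ``colored strands'' calculus: Lemma~\ref{strands} turns each side of (\ref{l})--(\ref{r}) into a Boolean condition on whether a fixed pair of strands crosses, one checks the two Boolean expressions coincide, and Proposition~\ref{stop} plus the observation that the three factors of $R_aR_bR_c$ are determined once the head and tail agree handles (\ref{m}). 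Nothing of this kind appears in your text, so as a proof of the stated theorem the proposal is incomplete.

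Even for the one item you do treat, the argument has a hole that you yourself flag as ``the main obstacle'': the implication ``$R_b\subseteq R_a$ and $aR_b=R_b \Rightarrow b^2=1$'' is only announced, with a strategy (``picking an extreme edge of the inversion graph \dots and testing \dots against the betweenness condition ii) of Lemma~\ref{criteria}'') but no actual argument; likewise the commutativity $bc=cb$ on the support of $b$ is delegated to an unwritten ``short case analysis''. These are precisely the non-routine points, so the classification is not established. Note that the paper takes a shorter route to $b^2=1$: it extracts from $R_a\asymp R_b=R_b$ the containment $bR_b\subseteq R_b$, hence $bR_b=R_b=R_{b^{-1}}$ by counting, which gives $b=b^{-1}$ immediately; it then writes $R_a=R_xR_b$ and uses $R_xR_bR_b=R_bR_xR_b$ together with Lemma~\ref{criteria} to conclude $xb=bx$. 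If you prefer your route through $aR_b=R_b$ (which is a different invariance condition, involving $a$ rather than $b$), you must actually carry out the extremal-edge argument; as written it is a claim, not a proof.
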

\begin{proof}
\par i) The formulas $R_a \asymp R_a = R_a,$ $R_a \bowtie R_a = R_a,$ and $R_a \asymp R_b = R_a \Longleftrightarrow R_a \bowtie R_b = R_b,$ immediately follow from the definition of the operations $\asymp$ and $\bowtie$.

\smallskip

\par ii) Let us remark that from the construction of Thurston's automaton it follows that  $R_a^* \cap \neg (R_a \bowtie R_b) = \varnothing$ and $(R_a \asymp R_b)^* \cap \neg R_b = \varnothing$. Since we have taken the common maximal braid, it follows that $R_a (R_a\bowtie R_b)$ and $(R_a \asymp R_b)R_b$ are the greedy normal forms, i.e,
\[
R_a \asymp (R_a \bowtie R_b) = R_a = R_a \bowtie (R_a \bowtie R_b),\quad (R_a \asymp R_b)\asymp R_b = R_b = (R_a \asymp R_b)\bowtie R_b.
\]

\smallskip

\par iii) Since this machinery works iff there exists common maximal braid, we have to consider the combinations of the following possibilities; 1) $a \asymp b = a$, $a \bowtie b = b$, 2) $b \asymp c = b$, $b \bowtie c = c$, 3) $(a \bowtie b) \asymp c = a \bowtie b$, $(a \bowtie b) \bowtie c = c$, 4) $a \asymp (b \asymp c) = a$, $a \bowtie (b \asymp c) = b \asymp c)$, 5) all above mentioned equations are false.

\par iii--1) If we assume one of the following possibilities
\[
\begin{cases} a \asymp b = a,\, a \bowtie b = b, \\ a \asymp (b \asymp c) = a,\, a \bowtie(b \asymp c) = b \asymp c \end{cases} \quad \begin{cases} b \asymp c = c, \, b \bowtie c = c\\ (a \bowtie b) \asymp c = a\bowtie b,\, (a \bowtie b)\bowtie c = c \end{cases}\quad \begin{cases}a \asymp b = a,\, a \bowtie b = b\\b \asymp c = b,\, b \bowtie c = c, \end{cases}
\]
we get the trivial equations,
\begin{align*}
& R_a = R_a,\\
& R_b \asymp R_c = R_b \asymp R_c,\\
& R_b \bowtie R_c = R_b \bowtie R_c.
\end{align*}

\smallskip

\par iii--2) Let us consider one of the following possibilities $\begin{cases} a \asymp b = a\\a \bowtie b = b\end{cases}$ or $\begin{cases} b \asymp c = b\\b\bowtie c = c\end{cases}$ then we have to prove the following formulas (respectively);
\begin{align*}
& R_a \asymp (R_b \asymp R_c) = R_a \asymp (R_b \asymp R_c),\\
&(R_a \bowtie (R_b \asymp R_c)) \asymp (R_b \bowtie R_c) = R_a \bowtie (R_b \asymp R_c),\\
&(R_a \bowtie (R_b \asymp R_c)) \bowtie (R_b \bowtie R_c) = R_b \bowtie R_c.
\end{align*}
we see that the first formula is a trivial equation, another two formulas immediately follow from Proposition \ref{stop} (see (\ref{stop1})). Further, if we put $\begin{cases} b \asymp c = b\\b\bowtie c = c\end{cases}$ then we have to prove that
\begin{align*}
&R_a \asymp R_b = (R_a \asymp R_b) \asymp ((R_a \bowtie R_b) \asymp R_c),\\
&(R_a \bowtie R_b) \asymp R_c = (R_a \asymp R_b) \bowtie ((R_a \bowtie R_b) \asymp R_c),\\
&(R_a \bowtie R_b) \bowtie R_c = (R_a \bowtie R_b) \bowtie R_c,
\end{align*}
we see that the last formula is a trivial equation and another two also follow from Proposition \ref{stop} (see (\ref{stop2})).

\smallskip

\par iii--4) Now, let us assume that all necessary common maximal braids exist. First of all, let us remark that it suffices to prove (\ref{l}) and (\ref{r}), because all these braids are made from the braid $R_aR_bR_c$ by moving the crossings of some strands. Then, if we get the same crossing in the head of a braid and in its tail, then the middle braids will be the same. Let us prove (\ref{l}). We have
$$
s \bowtie s' \mathrm{mod}(a \asymp (b \asymp c)) \Longleftrightarrow \begin{cases} s \bowtie s' \mathrm{mod}(a)\\ s \bowtie s' \mathrm{mod}(b)\\ s \bowtie s' \mathrm{mod}(c), \end{cases}
$$
and, on the other hand, we have
\begin{multline*}
s \bowtie s' \mathrm{mod}((a \asymp b) \asymp ((a \bowtie b) \asymp c)) \Longleftrightarrow \begin{cases} s \bowtie s' \mathrm{mod}(a \asymp b) \\ s \bowtie s' \mathrm{mod}((a \bowtie b) \asymp c)  \end{cases} \Longleftrightarrow \begin{cases} s \bowtie s' \mathrm{mod}(a)\\ s \bowtie s' \mathrm{mod}(b) \\ s \bowtie s' \mathrm{mod}(a \bowtie b) \\ s \bowtie s' \mathrm{mod}(c)  \end{cases} \Longleftrightarrow \\ \Longleftrightarrow \begin{cases} s \bowtie s' \mathrm{mod}(a)\\ s \bowtie s' \mathrm{mod}(b) \\ s \bowtie s' \mathrm{mod}(b) \mbox{ or } \begin{cases} s \bowtie s' \mathrm{mod}(a) \\ s \asymp s' \mathrm{mod}(b) \end{cases} \\ s \bowtie s' \mathrm{mod}(c)  \end{cases} \Longleftrightarrow \begin{cases} s \bowtie s' \mathrm{mod}(a)\\ s \bowtie s' \mathrm{mod}(b)\\ s \bowtie s' \mathrm{mod}(c), \end{cases}
\end{multline*}
i.e., we get $R_a \asymp (R_b \asymp R_c) = (R_a \asymp R_b) \asymp ((R_a \bowtie R_b) \asymp R_c)$.

\smallskip

\par iii--5) Let $s \bowtie s' \mathrm{mod}((a \bowtie(b \asymp c))\bowtie (b \bowtie c))$ then we have
\begin{multline*}
s \bowtie s' \mathrm{mod}((a \bowtie(b \asymp c))\bowtie (b \bowtie c)) \Longleftrightarrow s \bowtie s' \mathrm{mod}(b \bowtie c) \mbox{ or } \begin{cases} s \bowtie s'\mathrm{mod}(a \bowtie (b \asymp c))\\ s \asymp s' \mathrm{mod}(b \bowtie c) \end{cases} \Longleftrightarrow \\ \Longleftrightarrow  s \bowtie s' \mathrm{mod}(c) \mbox{ or } \begin{cases} s \bowtie s' \mathrm{mod}(b)\\ s \asymp s' \mathrm{mod}(c) \end{cases} \mbox{ or } \begin{cases}s \bowtie s' \mathrm{mod}(b \asymp c) \mbox{ or } \begin{cases} s \bowtie s' \mathrm{mod}(a)\\ s \asymp s' \mathrm{mod}(b \asymp c) \end{cases}\\ s \asymp s' \mathrm{mod}(b), \\ s \asymp s' \mathrm{mod}(c)  \end{cases} \Longleftrightarrow \\ \Longleftrightarrow s \bowtie s' \mathrm{mod}(c) \mbox{ or } \begin{cases} s \bowtie s' \mathrm{mod}(b)\\ s \asymp s' \mathrm{mod}(c) \end{cases} \mbox{ or } \begin{cases} \begin{cases} s \bowtie s' \mathrm{mod}(b) \\ s \bowtie s' \mathrm{mod}(c)\end{cases} \mbox{ or } \begin{cases} s \bowtie s' \mathrm{mod}(a) \\ s \asymp s' \mathrm{mod}(b) \mbox{ or } \begin{cases} s \bowtie s' \mathrm{mod}(b) \\ s \asymp s' \mathrm{mod}(c)   \end{cases}  \end{cases}\\ s \asymp s' \mathrm{mod}(b)\\ s \asymp s' \mathrm{mod}(c)  \end{cases}
\end{multline*}
we see that
$$
\begin{cases} \begin{cases} s \bowtie s' \mathrm{mod}(b), \\ s \bowtie s' \mathrm{mod}(c)\end{cases} \mbox{ or } \begin{cases} s \bowtie s' \mathrm{mod}(a), \\ s \asymp s' \mathrm{mod}(b) \mbox{ or } \begin{cases} s \bowtie s' \mathrm{mod}(b), \\ s \asymp s' \mathrm{mod}(c)   \end{cases}  \end{cases}\\ s \asymp s' \mathrm{mod}(b)\\ s \asymp s' \mathrm{mod}(c)  \end{cases} \Longleftrightarrow \begin{cases} s \bowtie s' \mathrm{mod}(a)\\ s \asymp s' \mathrm{mod}(b)\\s \asymp s' \mathrm{mod}(c)  \end{cases}
$$
i.e., we arrive at
$$
s \bowtie s' \mathrm{mod}((a \bowtie(b \asymp c))\bowtie (b \bowtie c)) \Longleftrightarrow s \bowtie s' \mathrm{mod}(c) \mbox{ or } \begin{cases} s \bowtie s' \mathrm{mod}(b)\\ s \asymp s' \mathrm{mod}(c) \end{cases} \mbox{ or } \begin{cases} s \bowtie s' \mathrm{mod}(a)\\ s \asymp s' \mathrm{mod}(b)\\s \asymp s' \mathrm{mod}(c). \end{cases}
$$
\par On the other hand, let $s \bowtie s' \mathrm{mod}((a \bowtie b) \bowtie c)$. We have
\begin{multline*}
s \bowtie s' \mathrm{mod}((a \bowtie b) \bowtie c) \Longleftrightarrow  s \bowtie s'\mathrm{mod}(c) \mbox{ or } \begin{cases} s \bowtie s'\mathrm{mod}(a \bowtie b)\\ s \asymp s'\mathrm{mod}(c)  \end{cases} \Longleftrightarrow \\ \Longleftrightarrow s \bowtie s'\mathrm{mod}(c) \mbox{ or } \begin{cases}s \bowtie s'\mathrm{mod}(b) \mbox{ or }  \begin{cases} s \bowtie s'\mathrm{mod}(a)\\ s \asymp s'\mathrm{mod}(b) \end{cases} \\ s \asymp s'\mathrm{mod}(c) \end{cases} \Longleftrightarrow \\ \Longleftrightarrow  s \bowtie s' \mathrm{mod}(c) \mbox{ or } \begin{cases} s \bowtie s' \mathrm{mod}(b)\\ s \asymp s' \mathrm{mod}(c) \end{cases} \mbox{ or } \begin{cases} s \bowtie s' \mathrm{mod}(a)\\ s \asymp s' \mathrm{mod}(b)\\s \asymp s' \mathrm{mod}(c), \end{cases}
\end{multline*}
i.e., $(R_a \bowtie (R_b \asymp R_c)) \bowtie (R_b \bowtie R_c) = (R_a \bowtie R_b) \bowtie R_c$.

\smallskip

\par iv) Let us prove that if $R_a \asymp R_b = R_b \ne R_a$, then the braid $R_a$ can be expressed as $R_a = R_{xb}$, where $x$ and $b$ are commutative permutations $xb = bx$, and for the permutation $b$ we have $b^2 = 1$.

\par Let us assume that we have a non-repeating braid $R_b$. We have to add a new crossing (in the upper side of this braid). We see that the equation $R_a \asymp R_b = R_b$ is possible iff $\neg R_b \nsupseteq bR_b$, but it is equivalent to the condition $R_b \supseteq bR_b$ and, since the number of elements of these sets are equal, then we get $R_b = bR_b$. It follows that $R_b = R_{b^{-1}}$, i.e., $b^2= 1$, as claimed. It is clear that the adding of a new crossing to the braid $R_b$ is equivalent to multiplication $R_xR_b$, i.e., we have $R_a = R_xR_b$. It is not hard to see that $xR_x \subseteq \neg R_b$, because $R_x$ means new crossing of the non-crossing strands of $R_b$. But from Lemma \ref{criteria} it follows that $R_xR_b =R_{bx}.$ Since $R_aR_b = R_bR_a$, we have $R_aR_b = R_xR_bR_b$ and, on the other hand, $R_bR_a = R_bR_xR_b$, i.e, $R_xR_bR_b = R_bR_xR_b$. It follows that $R_xR_b = R_bR_x$, but since $R_xR_b \in D$ (i.e., it is a non-repeating braid), then from Lemma \ref{criteria} it follows that $bR_b \subseteq \neg R_x$, i.e., we get $R_{xb} = R_{bx}$, which means that $xb = bx$.

\smallskip

\par The proof is completed.
\end{proof}

\smallskip

\begin{definition}[An order on the Thurston's generators]\label{order}
For any non-repeating braid $R_\pi$ we can define \cite[Lemma 9.1.5]{EpThur} its length $|R_\pi|$ as a power of the set $R_\pi$. Let us write any non-repeating braid $R_\pi \in D$ as follows:
\[
R_\pi = \left\{\underbrace{(1,i_{11}), \ldots (1,i_{1\ell_1})}_{\mbox{{\tiny all crossings of the first strand}}}, \ldots, \underbrace{(j,i_{j1}), \ldots, (j,i_{j\ell_j})}_{\mbox{{\tiny all crossings of the j-th strand}}}, \ldots, (n-1, i_{n-1,1})\right\},
\]
where $i_{11} < \ldots < i_{i\ell_1}$, $\ldots$, $i_{j1} < \ldots < i_{j\ell_j}$ etc. Let us set $1<2<\ldots<n$, then we can order $D$ deg-lexicographically. We denote this order by $\preceq$.
\end{definition}

\smallskip

\begin{theorem} For the braid monoid $\mathbf{Br}_n^+$, and for the set of non-repeating braids $D$ let us consider the order $\preceq$ as above. Then the following set of relations
\[
\mathcal{R} = \{R_aR_b = (R_a \asymp R_b)(R_a \bowtie R_b)\}
\]
is a Gr\"obner --- Shirshov basis for the braid monoid in Thurston's generators (non-repeating braids).
\end{theorem}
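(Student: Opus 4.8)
The plan is to invoke the Composition--Diamond (Shirshov) Lemma: presenting $\mathbf{Br}_n^+$ on the alphabet $D$ with defining relations $\mathcal{R}$, it suffices to check that $\preceq$ is an admissible deg--lex monomial order and that every composition (overlap ambiguity) among the elements of $\mathcal{R}$ is trivial. First one notes the relations hold in $\mathbf{Br}_n^+$: by Definition \ref{con1} the homomorphism $p$ sends $(R_a\asymp R_b)(R_a\bowtie R_b)$ to $a(a^{-1}\wedge\omega b)(a^{-1}\wedge\omega b)^{-1}b = ab = p(R_aR_b)$, and since $(R_a\asymp R_b)(R_a\bowtie R_b)$ is obtained from $R_aR_b$ by shifting the maximal common tail $aR_a\wedge\neg R_b$ from the first factor to the second, the two sides have the same length; hence they coincide in $\mathbf{Br}_n^+$ by Lemma \ref{9.1.11}. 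As $D$ contains $R_{s_1},\ldots,R_{s_{n-1}} = \sigma_1,\ldots,\sigma_{n-1}$ it generates $\mathbf{Br}_n^+$, so there is a surjection $\langle D\mid\mathcal{R}\rangle\to\mathbf{Br}_n^+$; it will be an isomorphism once confluence is established, because the $\mathcal{R}$-irreducible words are exactly the greedy normal forms, which form a transversal of $\mathbf{Br}_n^+$ by the greedy normal form theorem \cite{EpThur}.

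Next I would identify the leading monomials. For a non-trivial instance of $\mathcal{R}$, i.e.\ when $aR_a\wedge\neg R_b\neq R_\varepsilon$, the set-theoretic description $R_a\asymp R_b = R_a\cap a^{-1}R_b$ gives $R_a\asymp R_b\subsetneq R_a$, so $|R_a\asymp R_b|<|R_a|$ and therefore $R_a\asymp R_b\prec R_a$ in the deg--lex order on $D$; since both sides of the relation are words of length $2$ over $D$, this forces $(R_a\asymp R_b)(R_a\bowtie R_b)\prec R_aR_b$, so $R_aR_b$ is the leading monomial and the rule is $R_aR_b\longrightarrow (R_a\asymp R_b)(R_a\bowtie R_b)$. (When $aR_a\wedge\neg R_b = R_\varepsilon$ the two sides of $\mathcal{R}$ coincide and there is no rule.) Because all leading monomials have length $2$ and are pairwise distinct, there are no inclusion ambiguities; the only overlap ambiguities are the words $w = R_aR_bR_c$ with both $R_aR_b$ and $R_bR_c$ leading monomials, i.e.\ $aR_a\wedge\neg R_b\neq R_\varepsilon$ and $bR_b\wedge\neg R_c\neq R_\varepsilon$.

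The heart of the argument is to show that each such composition is trivial, and this is precisely where Theorem \ref{GSB} and Proposition \ref{stop} are used. Put
\[
d_1 := R_a\asymp(R_b\asymp R_c),\quad d_2:=\bigl(R_a\bowtie(R_b\asymp R_c)\bigr)\asymp(R_b\bowtie R_c),\quad d_3:=(R_a\bowtie R_b)\bowtie R_c.
\]
Reducing $w = R_aR_bR_c$ ``from the left'' one gets
\begin{align*}
w &\longrightarrow (R_a\asymp R_b)(R_a\bowtie R_b)R_c\\
  &\longrightarrow (R_a\asymp R_b)\bigl((R_a\bowtie R_b)\asymp R_c\bigr)\bigl((R_a\bowtie R_b)\bowtie R_c\bigr)\\
  &\longrightarrow d_1\,d_2\,d_3,
\end{align*}
where on the last step the first factor is rewritten by equation (\ref{l}) of Theorem \ref{GSB} and the second by (\ref{m}); reducing $w$ ``from the right'' one gets
\begin{align*}
w &\longrightarrow R_a(R_b\asymp R_c)(R_b\bowtie R_c)\\
  &\longrightarrow \bigl(R_a\asymp(R_b\asymp R_c)\bigr)\bigl(R_a\bowtie(R_b\asymp R_c)\bigr)(R_b\bowtie R_c)\\
  &\longrightarrow d_1\,d_2\,d_3,
\end{align*}
the last step using (\ref{r}) for the third factor. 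That the intermediate products really are rewritable (so the reductions proceed as displayed) follows from monotonicity of the lattice meet $\wedge$ together with the identities $R_a^*\cap\neg(R_a\bowtie R_b) = \varnothing$ and $(R_a\asymp R_b)^*\cap\neg R_b=\varnothing$ noted in the proof of Theorem \ref{GSB}; in the degenerate sub-cases where one of these products has trivial common tail, the corresponding factor of $d_1d_2d_3$ collapses to the partial expression and one invokes (\ref{stop1}) or (\ref{stop2}). Finally (\ref{stop3}) and (\ref{stop4}) give $d_1^*\wedge\neg d_2 = R_\varepsilon$ and $d_2^*\wedge\neg d_3 = R_\varepsilon$, so $d_1d_2d_3$ is $\mathcal{R}$-irreducible; hence both reductions of $w$ meet at this common normal form and the composition is trivial.

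The only genuine obstacle is the bookkeeping in the previous paragraph: one must check, over all combinations of which intermediate product carries a non-trivial common tail, that the two one-step divergences of $w$ really close up --- but this is exactly the information packaged in Proposition \ref{stop} and in the braid-lattice identities (\ref{l})--(\ref{r}), so it amounts to careful cross-referencing rather than new computation. Granting this, the Composition--Diamond Lemma shows $\mathcal{R}$ is a Gr\"obner--Shirshov basis, whose irreducible words $R_{\pi_1}\cdots R_{\pi_\ell}$ with $\pi_iR_{\pi_i}\wedge\neg R_{\pi_{i+1}}=R_\varepsilon$ are precisely the greedy normal forms of $\mathbf{Br}_n^+$; in particular $\langle D\mid\mathcal{R}\rangle\cong\mathbf{Br}_n^+$, and the theorem follows.
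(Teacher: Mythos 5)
Your proposal is correct and follows essentially the same route as the paper: the only ambiguities are the overlaps $R_aR_bR_c$, the two reduction chains are closed using the identities (\ref{l})--(\ref{r}) of Theorem \ref{GSB}, and Proposition \ref{stop} handles the degenerate cases and shows the common result $d_1d_2d_3$ is irreducible. The extra bookkeeping you supply (verifying the relations hold in $\mathbf{Br}_n^+$, identifying $R_aR_b$ as the leading monomial under $\preceq$, and matching irreducible words with greedy normal forms) is exactly the standard Composition--Diamond detail the paper leaves implicit.
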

\begin{proof}
Indeed, let us consider the Buchberger --- Shirshov's algorithm for the word $R_aR_bR_c$. We get
\begin{multline*}
[R_a|R_b|R_c] \to [R_a \asymp R_b| R_a \bowtie R_b|R_c] \to [R_a \asymp R_b|(R_a \bowtie R_b) \asymp R_c|(R_a \bowtie R_b) \bowtie R_c] \to\\ \to  [(R_a \asymp R_b) \asymp ((R_a \bowtie R_b)\asymp R_c)|(R_a \asymp R_b) \bowtie ((R_a \bowtie R_b)\asymp R_c)|R_b \bowtie R_c],
\end{multline*}
on the other hand,
\begin{multline*}
[R_a|R_b|R_c] \to [R_a| R_b \asymp R_c|R_b \bowtie R_c] \to [R_a \asymp (R_b \asymp R_c)|R_a \bowtie (R_b \asymp R_c)|R_b \bowtie R_c] \to \\ \to [R_a \asymp (R_b \asymp R_c)|(R_a \bowtie (R_b \asymp R_c))\asymp (R_b \bowtie R_c)|(R_a \bowtie (R_b \asymp R_c)) \bowtie (R_b \bowtie R_c)],
\end{multline*}
then from Lemma \ref{stop} and Theorem \ref{GSB} it follows that $\mathcal{R}$ is a Gr\"obner --- Shirshov basis, as claimed.
\end{proof}

\smallskip

\paragraph{Garside's braid and flip's involution.} There is an important element $\Omega = \Omega_n$ (Garside's braid or Garside's element), described physically as the $180^\circ$ clockwise rotation of the $n$ strands together. This braid corresponds to the permutation $\omega = \begin{pmatrix}1&2& \ldots &n-1& n \\ n& n-1 & \ldots &2 &1 \end{pmatrix}$ which is the maximal element of $\mathbb{S}_n$ with respect to the above mentioned order.

\smallskip

\par To find out more about $\Omega$, we look at a semigroup automorphism of $A^*$ called a {\it flip}, which takes each generator $\sigma_i$ to $\widetilde{\sigma_i}: = \sigma_{n-i}$. The name is justified, because the image of a braid under this automorphism is indeed obtained by flipping this braid around the horizontal axis.

\par Of course, the relation $\Omega R = \widetilde{R}\Omega$ is contained in the Gr\"obner --- Shirshov basis, because the set $\Omega^* \wedge \neg R = \Omega \wedge \neg R = \neg R$ is always a braid. Then the rewriting procedure looks like completing the braid $R$ to the braid $\Omega$, meanwhile $\Omega$ is transformed to $\widetilde{R}$ (see fig. \ref{RO->}).

\begin{figure}[h!]
\begin{center}
\begin{tikzpicture}
\draw [line width = 2.5,orange] (2,0) to [out = 270,in=90] (0,-4);
\draw [line width = 6,white] (1.5,0) to [out = 270,in=90] (0,-3) to [out = 270, in = 90] (0.5,-4);
\draw [line width = 2.5,violet] (1.5,0) to [out = 270,in=90] (0,-3) to [out = 270, in = 90] (0.5,-4);
\draw [line width = 6,white] (1,0) to [out = 270,in=90] (0,-2) to [out = 270, in = 90] (1,-4);
\draw [line width = 2.5,green] (1,0) to [out = 270,in=90] (0,-2) to [out = 270, in = 90] (1,-4);
\draw [line width = 6,white] (0.5,0) to [out = 270,in=90] (0,-1) to [out = 270, in = 90] (1.5,-4);
\draw [line width = 2.5,blue] (0.5,0) to [out = 270,in=90] (0,-1) to [out = 270, in = 90] (1.5,-4);
\draw [line width = 7,white] (0,0) to [out = 270,in=90] (2,-4);
\draw [line width = 2.5,red] (0,0) to [out = 270,in=90] (2,-4);
\draw [line width = 2.5,red] (2,-4) to [out = 270,in=90] (2,-6);
\draw [line width = 2.5,cyan] (1.5,-4) to [out = 270,in=90] (1,-6);
\draw [line width = 7,white] (1,-4) to [out = 270,in=90] (1.5,-6);
\draw [line width = 2.5,olive] (1,-4) to [out = 270,in=90] (1.5,-6);
\draw [line width = 2.5,gray] (0.5,-4) to [out = 270,in=90] (0,-6);
\draw [line width = 7,white] (0,-4) to [out = 270,in=90] (0.5,-6);
\draw [line width = 2.5,black] (0,-4) to [out = 270,in=90] (0.5,-6);
\draw [line width = 2.5,gray] (7,-2) to [out = 270,in=90] (5,-6);
\draw [line width = 6,white] (6.5,-2) to [out = 270,in=90] (5,-5) to [out = 270, in = 90] (5.5,-6);
\draw [line width = 2.5,black] (6.5,-2) to [out = 270,in=90] (5,-5) to [out = 270, in = 90] (5.5,-6);
\draw [line width = 6,white] (6,-2) to [out = 270,in=90] (5,-4) to [out = 270, in = 90] (6,-6);
\draw [line width = 2.5,cyan] (6,-2) to [out = 270,in=90] (5,-4) to [out = 270, in = 90] (6,-6);
\draw [line width = 6,white] (5.5,-3) to [out = 270,in=90] (5,-3) to [out = 270, in = 90] (6.5,-6);
\draw [line width = 2.5,olive] (5.5,-2) to [out = 270,in=90] (5,-3) to [out = 270, in = 90] (6.5,-6);
\draw [line width = 7,white] (5,-2) to [out = 270,in=90] (7,-6);
\draw [line width = 2.5,red] (5,-2) to [out = 270,in=90] (7,-6);
\draw [line width = 2.5,orange] (7,0) to [out = 270,in=90] (6.5,-2);
\draw [line width = 7,white] (6.5,0) to [out = 270,in=90] (7,-2);
\draw [line width = 2.5,violet] (6.5,0) to [out = 270,in=90] (7,-2);
\draw [line width = 2.5,green] (6,0) to [out = 270,in=90] (5.5,-2);
\draw [line width = 7,white] (5.5,0) to [out = 270,in=90] (6,-2);
\draw [line width = 2.5,blue] (5.5,0) to [out = 270,in=90] (6,-2);
\draw [line width = 2.5,red] (5,0) to [out = 270,in=90] (5,-2);
\draw [fill](0,0) circle (2.5pt);
\draw [fill](0.5,0) circle (2.5pt);
\draw [fill](1,0) circle (2.5pt);
\draw [fill](1.5,0) circle (2.5pt);
\draw [fill](2,0) circle (2.5pt);
\draw [fill](0,-4) circle (2.5pt);
\draw [fill](0.5,-4) circle (2.5pt);
\draw [fill](1,-4) circle (2.5pt);
\draw [fill](1.5,-4) circle (2.5pt);
\draw [fill](2,-4) circle (2.5pt);
\draw [fill](5,-2) circle (2.5pt);
\draw [fill](5.5,-2) circle (2.5pt);
\draw [fill](6,-2) circle (2.5pt);
\draw [fill](6.5,-2) circle (2.5pt);
\draw [fill](7,-2) circle (2.5pt);
\draw [fill](5,0) circle (2.5pt);
\draw [fill](5.5,0) circle (2.5pt);
\draw [fill](6,0) circle (2.5pt);
\draw [fill](6.5,0) circle (2.5pt);
\draw [fill](7,0) circle (2.5pt);
\draw [fill](5,-6) circle (2.5pt);
\draw [fill](5.5,-6) circle (2.5pt);
\draw [fill](6,-6) circle (2.5pt);
\draw [fill](6.5,-6) circle (2.5pt);
\draw [fill](7,-6) circle (2.5pt);
\draw [fill](0,-6) circle (2.5pt);
\draw [fill](0.5,-6) circle (2.5pt);
\draw [fill](1,-6) circle (2.5pt);
\draw [fill](1.5,-6) circle (2.5pt);
\draw [fill](2,-6) circle (2.5pt);
\node[above] at (0,0){$1$};
\node[above] at (0.5,0){$2$};
\node[above] at (1,0){$3$};
\node[above] at (1.5,0){$4$};
\node[above] at (2,0){$5$};
\node[above] at (5,0){$1$};
\node[above] at (5.5,0){$2$};
\node[above] at (6,0){$3$};
\node[above] at (6.5,0){$4$};
\node[above] at (7,0){$5$};
\node[below] at (0,-6){$1$};
\node[below] at (0.5,-6){$2$};
\node[below] at (1,-6){$3$};
\node[below] at (1.5,-6){$4$};
\node[below] at (2,-6){$5$};
\node[below] at (5,-6){$1$};
\node[below] at (5.5,-6){$2$};
\node[below] at (6,-6){$3$};
\node[below] at (6.5,-6){$4$};
\node[below] at (7,-6){$5$};
\end{tikzpicture}
\end{center}
\caption{Here the rewriting procedure $\Omega R_a = \widetilde{R_a}\Omega$ is shown. The braid $R_a$ takes all missing crossings and $R_a$ is transformed to $\Omega$. Meanwhile, $\Omega$ is transformed  to the $\widetilde{R_a}$.}\label{RO->}
\end{figure}
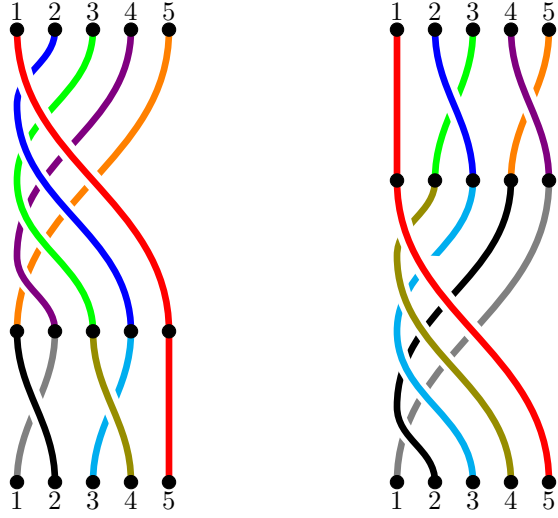

\smallskip

\begin{lemma}\label{fliplemma}
For any two permutations $a$ and $b$, we have
\[
\widetilde{a \asymp b} = \widetilde{a} \asymp \widetilde{b}, \qquad \widetilde{a \bowtie b} = \widetilde{a} \bowtie \widetilde{b}.
\]
\end{lemma}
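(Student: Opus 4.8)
The plan is to push everything down to the symmetric group, where the flip becomes an explicit group automorphism, and then to check that this automorphism commutes with the three ingredients $*$, $\neg$ and $\wedge$ out of which $\asymp$ and $\bowtie$ are assembled in Definition \ref{con1}. Concretely, I would first record the dictionary. The flip of $\mathbf{Br}_n^+$ is compatible with the homomorphism $p$, and induces the automorphism $\varphi$ of $\mathbb{S}_n$ with $\varphi(s_i)=s_{n-i}$ (well defined and a homomorphism because the Coxeter relations of $\mathbb{S}_n$ are invariant under $i\mapsto n-i$). Conjugating $s_i$ by $\omega$ shows $\varphi(\pi)=\omega\pi\omega$, and unwinding Definition \ref{conR} yields the $R$-set description
\[
R_{\varphi(\pi)}=\{(n+1-j,\ n+1-i)\ :\ (i,j)\in R_\pi\},
\]
which is exactly the geometric statement that flipping a braid about a horizontal axis reverses the strand labels $i\leftrightarrow n+1-i$. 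I will write $\widetilde R_\pi:=R_{\varphi(\pi)}$ and note that the underlying map $\rho\colon(i,j)\mapsto(n+1-j,n+1-i)$ on pairs is an inclusion-preserving involution that fixes $\Omega$ set-wise and preserves the relation ``$j$ lies strictly between $i$ and $k$'' (since $x\mapsto n+1-x$ reverses the order on $\{1,\dots,n\}$).

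Next I would verify the three compatibilities. That the flip commutes with $*$ is immediate, since $R_\pi^*=R_{\pi^{-1}}$ and $\varphi$ is a homomorphism. That it commutes with $\neg$ follows from $\varphi(\omega)=\omega$ (the flip of the half-twist $\Omega$ is again $\Omega$) together with $\neg R_\pi=R_{\pi\omega}=\Omega\setminus R_\pi$ and the fact that $\rho$ fixes $\Omega$. The one substantive point is that the flip commutes with the meet $\wedge$: here I would argue that, because $\pi_1\le\pi_2\iff R_{\pi_1}\subseteq R_{\pi_2}$ and $\rho$ is an inclusion-preserving bijection, $\varphi$ is an automorphism of the poset $(\mathbb{S}_n,\le)$, and an order-automorphism automatically commutes with greatest lower bounds, so $\widetilde{R_{\pi_1}\wedge R_{\pi_2}}=\widetilde R_{\pi_1}\wedge\widetilde R_{\pi_2}$; alternatively this can be read off directly from the combinatorial formula for $\wedge$, using that $\rho$ preserves betweenness. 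This step — getting the $R$-set description of the flip right and then its interaction with $\wedge$ — is the main obstacle; everything else is bookkeeping.

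Finally I would assemble the result. By Definition \ref{con1} (cf.\ Definition \ref{ThurA}), writing $R_x:=R_a^*\wedge\neg R_b$ for the common maximal braid and $x$ for its permutation, one has $R_a\asymp R_b=R_{ax}$ and $R_a\bowtie R_b=R_{x^{-1}b}$, and this also covers the degenerate case $R_x=R_\varepsilon$ (then $x=\varepsilon$, $R_a\asymp R_b=R_a$, $R_a\bowtie R_b=R_b$). Using the compatibilities above,
\[
R_{\widetilde a}^*\wedge\neg R_{\widetilde b}=\widetilde{R_a^*}\wedge\widetilde{\neg R_b}=\widetilde{R_a^*\wedge\neg R_b}=\widetilde R_x=R_{\varphi(x)},
\]
so the common maximal braid attached to the pair $(\widetilde a,\widetilde b)$ has permutation $\varphi(x)$; since $\varphi$ is a homomorphism this gives
\[
\widetilde{R_a\asymp R_b}=R_{\varphi(ax)}=R_{\varphi(a)\varphi(x)}=R_{\widetilde a}\asymp R_{\widetilde b},\qquad
\widetilde{R_a\bowtie R_b}=R_{\varphi(x^{-1}b)}=R_{\varphi(x)^{-1}\varphi(b)}=R_{\widetilde a}\bowtie R_{\widetilde b}.
\]
Because $\varphi$ respects products, there is no issue with the left/right conventions of Remark \ref{warning}. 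Conceptually, the compatibilities say the flip is an automorphism of Thurston's automaton $M$, hence commutes with the rewriting of $R_aR_b$ to its greedy normal form, and the two identities are then just the uniqueness of that normal form.
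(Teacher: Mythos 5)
Your proof is correct, but it takes a genuinely different route from the paper. The paper deduces the lemma from the confluence of the rewriting system: it reduces the word $[\Omega|R_a|R_b]$ to its greedy normal form along two paths, one using the already-verified relation $\Omega R = \widetilde{R}\,\Omega$ first and the Thurston rewriting $R_aR_b \to R_{a\asymp b}R_{a\bowtie b}$ second, the other in the opposite order, and then matches the resulting triples componentwise; this leans on Theorem \ref{GSB} and on the uniqueness of the normal form as a sequence of factors. You instead work entirely at the level of $\mathbb{S}_n$: you identify the flip with conjugation by $\omega$ (equivalently, with the pair-relabelling $(i,j)\mapsto(n+1-j,\,n+1-i)$ on $R$-sets), check that this order-automorphism commutes with $*$, $\neg$ and $\wedge$, and then read the identities straight off Definition \ref{con1}; your computation of $R_{\varphi(\pi)}$ and the betweenness/order-reversal argument for compatibility with $\wedge$ are both right, and the degenerate case $R_a^*\wedge\neg R_b=R_\varepsilon$ is indeed covered. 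What each approach buys: the paper's diagram chase is shorter given the machinery already in place, but it presupposes the confluence result and the relation $\Omega R=\widetilde{R}\,\Omega$; your argument is self-contained and more transparent structurally (the flip is an automorphism of all the lattice-theoretic data defining the automaton), it does not risk any circularity with the subsequent Gr\"obner--Shirshov theorem for the group, and as a by-product it reproves $\Omega R=\widetilde{R}\,\Omega$ rather than assuming it.
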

\begin{proof}
This immediately follows from Theorem \ref{GSB}. Indeed, we have
\[
 \xymatrix{
 & [\Omega|R_a|R_b] \ar@{->}[rd] \ar@{->}[ld] & \\
 [\widetilde{R_a}|\Omega|R_b] \ar@{->}[d] && [\Omega| R_a \asymp R_b|R_a \bowtie R_b] \ar@{->}[d]\\
 [\widetilde{R_a}|\widetilde{R_b}|\Omega] \ar@{->}[d] && [\widetilde{R_a \asymp R_b}|\Omega|R_a \bowtie R_b] \ar@{->}[d]\\
 [\widetilde{R_a} \asymp \widetilde{R_b}|\widetilde{R_a} \bowtie \widetilde{R_b}|\Omega] \ar@{=}[rr] && [\widetilde{R_a \asymp R_b}|\widetilde{R_a \bowtie R_b}|\Omega]
  }
\]
it follows that $\widetilde{a \asymp b} = \widetilde{a} \asymp \widetilde{b}$,and $\widetilde{a \bowtie b} = \widetilde{a} \bowtie \widetilde{b}$, as claimed.
\end{proof}

\smallskip

\par Now we can present a Gr\"obner --- Shirshov basis for the braid groups.

\begin{theorem}
For the braid group $\mathbf{Br}_n$, which is generated by non-repeating braids (Thurston's generators) $R_a$, $a \in \mathbb{S}_n$, a Gr\"obner --- Shirshov basis, with respect the order $\preceq$ (see Definition \ref{order}), consists of the following relations:
\begin{align}
& R_aR_b = R_{a \asymp b}R_{a \bowtie b}\\
&\Omega^{-1}R_a  = \widetilde{R_a}\Omega^{-1},\\
&\Omega\Omega^{-1} = \Omega^{-1}\Omega = R_\varepsilon.
\end{align}
\end{theorem}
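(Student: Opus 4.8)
The plan is to apply the Composition--Diamond Lemma (the Buchberger --- Shirshov algorithm), exactly as in the proof of the preceding theorem, after enlarging the alphabet of Thurston's generators by a single new letter $\Omega^{-1}$. First I would extend the deg-lexicographic order $\preceq$ of Definition \ref{order} by declaring $\Omega^{-1}$ to be the top letter, so that $R_a \prec \Omega = R_\omega \prec \Omega^{-1}$ for every $a \ne \omega$; this is again a monomial well-order, and its restriction to words in the $R_a$ is the order used before. With this convention the leading words of the three families of relations are $R_aR_b$, $\Omega^{-1}R_a$, $\Omega\Omega^{-1}$ and $\Omega^{-1}\Omega$, and the defining set is self-reduced apart from the harmless coincidence that, when $a = \omega$, the leading word of $\Omega^{-1}R_a = \widetilde{R_a}\Omega^{-1}$ equals that of $\Omega^{-1}\Omega = R_\varepsilon$. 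I would also note that $\Omega^{-1}R_a = \widetilde{R_a}\Omega^{-1}$ is a formal consequence of the monoid relation $\Omega R_a = \widetilde{R_a}\Omega$ (which, as observed in the paragraph on Garside's braid, is an instance of $R_aR_b = R_{a\asymp b}R_{a\bowtie b}$) together with $\Omega\Omega^{-1} = \Omega^{-1}\Omega = R_\varepsilon$; hence the listed relations do present $\mathbf{Br}_n$ in the stated generators, and all that remains is to verify the triviality of the compositions.

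By the preceding theorem every composition among instances of $R_aR_b = R_{a\asymp b}R_{a\bowtie b}$ is already trivial, so I would only have to treat the new ambiguities created by $\Omega^{-1}$. The overlaps $\Omega\Omega^{-1}R_a$, $\Omega^{-1}\Omega R_a$, $\Omega\Omega^{-1}\Omega$ and $\Omega^{-1}\Omega\Omega^{-1}$ are resolved directly: in each case both rewriting sequences annihilate the cancelling pair $\Omega\Omega^{-1}$ or $\Omega^{-1}\Omega$ and terminate at $R_a$, $R_a$, $\Omega$ and $\Omega^{-1}$ respectively, where in the two cases containing an $R_a$ one first uses the flip relation $\Omega\widetilde{R_a} = R_a\Omega$ to carry $\Omega$ across $\widetilde{R_a}$. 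The inclusion ambiguity at $a = \omega$ is trivial because $\Omega^{-1}\Omega$ rewrites by the flip rule to $\Omega\Omega^{-1}$ and thence by $\Omega\Omega^{-1} = R_\varepsilon$ to $R_\varepsilon$, which agrees with the direct reduction $\Omega^{-1}\Omega \to R_\varepsilon$.

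The one composition that genuinely uses the structure theory is the overlap $\Omega^{-1}R_aR_b$ of the leading words $\Omega^{-1}R_a$ and $R_aR_b$. Reducing the tail $R_aR_b$ first yields $\Omega^{-1}R_{a\asymp b}R_{a\bowtie b}$, and moving $\Omega^{-1}$ to the right twice gives $\widetilde{R_{a\asymp b}}\,\widetilde{R_{a\bowtie b}}\,\Omega^{-1}$; reducing the head $\Omega^{-1}R_a$ first yields $\widetilde{R_a}\,\widetilde{R_b}\,\Omega^{-1}$, which the normal-form rule sends to $(\widetilde{R_a}\asymp\widetilde{R_b})(\widetilde{R_a}\bowtie\widetilde{R_b})\,\Omega^{-1}$. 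These two words coincide precisely by Lemma \ref{fliplemma}, which gives $\widetilde{a\asymp b} = \widetilde{a}\asymp\widetilde{b}$ and $\widetilde{a\bowtie b} = \widetilde{a}\bowtie\widetilde{b}$; and if $R_aR_b$ is already $\preceq$-reduced then so is $\widetilde{R_a}\widetilde{R_b}$ by the same lemma, so there is nothing to check. I expect this to be the main point of the argument: it is the statement that conjugation by $\Omega$ commutes with the greedy rewriting, and without Lemma \ref{fliplemma} this confluence diagram would not close.

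Since all ambiguities are thereby trivial, the Composition--Diamond Lemma shows that $\{\,R_aR_b = R_{a\asymp b}R_{a\bowtie b},\ \Omega^{-1}R_a = \widetilde{R_a}\Omega^{-1},\ \Omega\Omega^{-1} = \Omega^{-1}\Omega = R_\varepsilon\,\}$ is a Gr\"obner --- Shirshov basis for $\mathbf{Br}_n$ with respect to $\preceq$, as claimed.
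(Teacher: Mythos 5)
Your proposal is correct and follows essentially the same route as the paper: run the Buchberger --- Shirshov (Composition--Diamond) procedure on the listed relations and close the ambiguities involving $\Omega^{\pm 1}$ by appealing to Lemma \ref{fliplemma}. You are in fact more systematic than the paper, which only displays the diagram for $[R_a|\Omega^{\delta}|\Omega^{-\delta}]$ and asserts the rest; your explicit resolution of the overlap $\Omega^{-1}R_aR_b$ is precisely where Lemma \ref{fliplemma} is needed and is the composition the paper leaves implicit.
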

\begin{proof}
We just have to check the last two relations, but the second relation follows from Lemma \ref{fliplemma}. Since $\widetilde{\widetilde R} = R$, then for a fixed $\delta \in \{-1,1\}$, we get
\[
 \xymatrix{
 & [R_a|\Omega^\delta|\Omega^{-\delta}] \ar@{->}[rd] \ar@{->}[ld] &\\
 [\Omega^\delta|\widetilde{R_a}|\Omega^{-\delta}] \ar@{->}[d] && [R_a|R_\varepsilon] \ar@{->}[d]\\
 [\Omega^{\delta}|\Omega^{-\delta}|R_a] \ar@{->}[d] && [R_a] \ar@{=}[d] \\
 [R_a] \ar@{=}[rr] && [R_a]
 }
\]
it means that all relations are closed, as claimed.
\end{proof}

\smallskip

\begin{remark}
A Gr\"obner --- Shirshov basis for the braid groups has been already found before  (see \cite{BokutBraid}). This result is based on the concept of Bokut' --- Shiao's normal form for permutations \cite{BSh}. It was showed that a Gr\"obner --- Shirshov basis for the braid monoid and for the braid groups are described via two and five kinds of relations, respectively. But, of course, all these relations can be described via Thurston's operations and, in fact, most of them have the same spirit, i.e., we have the same relations as we described.
\end{remark}

\section{Thurston's algorithm for rewriting braids to the greedy normal form.}
In this section we will describe (step by step) the working of Thurston's automaton via an algorithm and also describe the output words. We will also present an example and see that this algorithm can be easily used for rewriting braids to the normal form (greedy normal form).

\smallskip

\begin{theorem}\label{cononform}
Let $R_w = R_{b_1} \cdots R_{b_\ell} \in \mathbf{Br}_n^+$ be a positive braid word in the greedy normal form, then the final state of $M(R_a, R_w)$ can be described as follows:
\begin{multline*}
R_a\cdot R_w \to \\ \to  [R_a \asymp R_{b_1}][(R_a \bowtie R_{b_1})\asymp R_{b_2}]\cdots [(((R_a \bowtie R_{b_1})\bowtie R_{b_2}) \bowtie \cdots \bowtie R_{b_{\ell-1}})\asymp R_{b_\ell}][(R_a \bowtie R_{b_1})\bowtie \cdots \bowtie R_{b_\ell}].
\end{multline*}
\end{theorem}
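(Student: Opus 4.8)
The plan is to argue by induction on $\ell$, the only genuine ingredients being the defining rewriting relation $R_aR_b=(R_a\asymp R_b)(R_a\bowtie R_b)$ and the ``stopping'' implication (\ref{stop2}) of Proposition \ref{stop}; everything else is bookkeeping with the induction and with the definition of the right-greedy normal form.

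For the base case $\ell=1$ one applies the relation once, $R_aR_{b_1}=(R_a\asymp R_{b_1})(R_a\bowtie R_{b_1})$ in $\mathbf{Br}_n^+$, and observes that, by the very construction of the automaton $M$ (Definition \ref{ThurA}), the factor $R_a\bowtie R_{b_1}$ is the maximal tail that may be multiplied on the right by $R_{b_1}$ while staying in $D$; hence $(R_a\asymp R_{b_1})^*\wedge\neg(R_a\bowtie R_{b_1})=R_\varepsilon$ and the two-factor product is already in right-greedy normal form, with last factor $R_a\bowtie R_{b_1}$. For the inductive step, assume the formula for words of length $\ell-1$ and note first that, since $R_w=R_{b_1}\cdots R_{b_\ell}$ is in right-greedy normal form, so is each of its consecutive subwords (the adjacency conditions for a subword form a subset of those for $R_w$); in particular $R_{b_2}\cdots R_{b_\ell}$ is in right-greedy normal form and $R_{b_1}^*\wedge\neg R_{b_2}=R_\varepsilon$. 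I would then split off the leading letter,
\[
R_aR_w=(R_a\asymp R_{b_1})\cdot\bigl((R_a\bowtie R_{b_1})\,R_{b_2}\cdots R_{b_\ell}\bigr),
\]
and apply the induction hypothesis to $(R_a\bowtie R_{b_1})\,R_{b_2}\cdots R_{b_\ell}$: its explicit right-greedy normal form is precisely the string of factors of the asserted formula from the second one onward, namely $[(R_a\bowtie R_{b_1})\asymp R_{b_2}]\cdots[(R_a\bowtie R_{b_1})\bowtie\cdots\bowtie R_{b_\ell}]$. Concatenation reproduces the displayed product as an identity in $\mathbf{Br}_n^+$, which is the easy half of the claim, each step being an instance of a relation valid in the monoid.

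The one substantive point is to check that the resulting $(\ell+1)$-factor product is in right-greedy normal form. Every adjacency among the factors from the second onward already satisfies the defining condition by the induction hypothesis; the only adjacency not yet accounted for is the one between $R_a\asymp R_{b_1}$ and $(R_a\bowtie R_{b_1})\asymp R_{b_2}$, i.e.\ one must verify
\[
(R_a\asymp R_{b_1})^*\wedge\neg\bigl((R_a\bowtie R_{b_1})\asymp R_{b_2}\bigr)=R_\varepsilon .
\]
This is exactly implication (\ref{stop2}) of Proposition \ref{stop}, read with $(a,b,c)=(a,b_1,b_2)$, whose hypothesis $R_{b_1}^*\wedge\neg R_{b_2}=R_\varepsilon$ was recorded above. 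Hence the whole product is in right-greedy normal form; suppressing any trivial factors $R_\varepsilon$ that may occur (which does not affect the displayed identity), uniqueness of the right-greedy normal form together with the fact that $M$ rewrites $R_aR_w$ exactly into that form (Definition \ref{ThurA}) gives the theorem, the final state being the last factor $(R_a\bowtie R_{b_1})\bowtie\cdots\bowtie R_{b_\ell}$. The only place where real work is hidden is this last verification, and it has already been isolated as Proposition \ref{stop}; the main obstacle, if any, is therefore just the careful matching of the telescoping factors produced by the induction with those written in the statement.
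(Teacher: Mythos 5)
Your proposal is correct and follows essentially the same route as the paper: both proceed by a left-to-right induction that splits off $R_{b_1}$, applies the rewriting relation, and reduces everything to the single new adjacency check between $R_a\asymp R_{b_1}$ and $(R_a\bowtie R_{b_1})\asymp R_{b_2}$. The only (cosmetic) difference is that you certify this adjacency directly via implication (\ref{stop2}) of Proposition \ref{stop}, while the paper deduces the equivalent fact $[R_a\asymp R_{b_1}]\asymp[(R_a\bowtie R_{b_1})\asymp R_{b_2}]=R_a\asymp R_{b_1}$ from relation (\ref{l}) together with $R_{b_1}\asymp R_{b_2}=R_{b_1}$; the two certificates are interchangeable.
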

\begin{proof} First of all, from Proposition \ref{stop} it follows that we can rewrite our word strictly in one direction (from left to right), i.e, we cannot come back after some steps in the chosen direction and etc. We have
\begin{multline*}
R_a \cdot R_w = R_a \cdot  R_{b_1} \cdots R_{b_\ell} \to [R_a \asymp R_{b_1}][R_a \bowtie R_{b_1}] \cdot R_{b_2} \cdots R_{b_\ell} \to \\ \to [R_a \asymp R_{b_1}][(R_a \bowtie R_{b_1})\asymp R_{b_2}][(R_a \bowtie R_{b_1})\bowtie R_{b_2}] \cdot R_{b_3} \cdots R_{b_\ell}.
\end{multline*}
Let us remark that $[R_a \asymp R_{b_1}][(R_a \bowtie R_{b_1})\asymp R_{b_2}]$ is the greedy normal form. Indeed, since $R_{b_1}R_{b_2}$ is the greedy normal form, $R_{b_1} \asymp R_{b_2} = R_{b_1}$, $R_{b_1} \bowtie R_{b_2} = R_{b_2}$. Then, using (\ref{l}), we get
\[
[R_a \asymp R_{b_1}]\asymp[(R_a \bowtie R_{b_1})\asymp R_{b_2}] = R_a \asymp (R_{b_1} \asymp R_{b_2}) = R_a \asymp R_{b_1},
\]
as claimed. Further, using induction, we will obtain
\begin{multline*}
R_a\cdot R_w \to \\ \to [R_a \asymp R_{b_1}][(R_a \bowtie R_{b_1})\asymp R_{b_2}]\cdots [(((R_a \bowtie R_{b_1})\bowtie R_{b_2}) \bowtie \cdots \bowtie R_{b_{j-1}})\bowtie R_{b_j}] R_{b_{j+1}}\cdots R_{b_\ell}.
\end{multline*}
Let us again remark that $[((R_a\bowtie R_{b_1})\bowtie \cdots \bowtie R_{b_{j-2}})\asymp R_{b_{j-1}}][((R_a \bowtie R_{b_1})\bowtie R_{b_{j-1}})\asymp R_{b_j}]$ is the greedy normal form. Indeed, let us denote $(R_a\bowtie R_{b_1})\bowtie R_{b_{j-2}}$ by $R_c$, then, using (\ref{l}), we arrive at
\[
[R_c \asymp R_{b_{j-1}}] \asymp [(R_c\bowtie R_{b_{j-1}})\asymp R_{b_j}] = R_c \asymp (R_{b_{j-1}} \asymp R_{b_{j}}) = R_c \asymp R_{b_{j-1}},
\]
as claimed. So, using induction, we will get
\begin{multline*}
R_a\cdot R_w \to \\ \to  [R_a \asymp R_{b_1}][(R_a \bowtie R_{b_1})\asymp R_{b_2}]\cdots [(((R_a \bowtie R_{b_1})\bowtie R_{b_2}) \bowtie \cdots \bowtie R_{b_{\ell-1}})\asymp R_{b_\ell}][(R_a \bowtie R_{b_1})\bowtie \cdots \bowtie R_{b_\ell}],
\end{multline*}
as claimed.
\end{proof}

\smallskip

\par Now, we can present the working of Thurston's automaton via the following algorithm.

\smallskip

\begin{itemize}
 \item[{\bf Step 0.}] There are two permutations $a$ and $b$.
 \item[{\bf Step 1.}] Find $a^{-1}$, find $\omega b$ and put $a':=a^{-1}$ and $b': =b \omega$.
 \item[{\bf Step 2.}] Construct the sets $R_{a'}$ and $R_{b'}$ (see Definition \ref{conR}).
 \item[{\bf Step 3.}] Find  the set $R_x: = R_{a'} \cap R_{b'}$. Check whether or not the set $R_x$ satisfies the condition ii) of Lemma \ref{criteria}. If yes, then go to {\bf Step 4}, otherwise $R_aR_b$ is the greedy normal form and go to {\bf Step 6}.
 \item[{\bf Step 4.}] Find the permutation $x$ using Definition \ref{conR}.
 \item[{\bf Step 5.}] Put $a \asymp b: = ax$ and $a \bowtie b:= x^{-1}b$, i.e., $R_a R_b \to R_{a \asymp b}R_{a \bowtie b}$.
 \item[{\bf Step 6.}] The greedy normal form is found. The end.
\end{itemize}

\smallskip

\par Let us demonstrate the working of this algorithm.

\smallskip

\begin{example}
Let us consider the following permutations,
\[
a= \begin{pmatrix} 1&2&3&4&5&6&7&8\\3&1&7&8&4&5&2&6 \end{pmatrix}, \qquad b= \begin{pmatrix} 1&2&3&4&5&6&7&8\\5&2&6&7&8&1&4&3 \end{pmatrix}.
\]
the corresponding braids $R_a$ and $R_b$ are shown in the fig. \ref{ex1alg}.

\begin{figure}[h!]
\begin{center}
\begin{tikzpicture}
\draw [line width = 2.5] (0.5,0) to [out = 270, in = 70] (0,-2) to [out = 250,in=90] (0,-5);
\draw [line width = 2.5] (3,0) to [out = 280, in = 80] (0.3,-3.5) to [out=260,in=90]   (0.5,-5);
\draw [line width = 2.5] (3.5,0) to [out = 270, in = 70] (3,-3.3) to [out = 260,in=110](2.5,-5);
\draw [line width = 9,white] (2.5,0) to [out = 270, in = 90] (3,-1.5) to [out = 270, in=110] (2,-5);
\draw [line width = 2.5] (2.5,0) to [out = 270, in = 90] (3,-1.5) to [out = 270, in=110] (2,-5);
\draw [line width = 8,white] (2,0) to [out = 250, in = 110] (0.9,-2) to [out =290, in =90] (1.5,-5);
\draw [line width = 2.5] (2,0) to [out = 250, in = 110] (0.9,-2) to [out =290, in =90] (1.5,-5);
\draw [line width = 9,white] (1.5,0) to [out = 280, in = 85] (3.5,-5);
\draw [line width = 2.5] (1.5,0) to [out = 280, in = 85] (3.5,-5);
\draw [line width = 9, white] (1,0) to [out = 280, in = 120] (2.25,-3.7) to [out = 300,in = 90](3,-5);
\draw [line width = 2.5] (1,0) to [out = 280, in = 120] (2.25,-3.7) to [out = 300,in = 90](3,-5);
\draw [line width = 9,white] (0,0) to [out = 270, in = 90] (1,-5);
\draw [line width = 2.5] (0,0) to [out = 270, in = 90] (1,-5);
\draw [fill](0,0) circle (2.5pt);
\draw [fill](0.5,0) circle (2.5pt);
\draw [fill](1,0) circle (2.5pt);
\draw [fill](1.5,0) circle (2.5pt);
\draw [fill](2,0) circle (2.5pt);
\draw [fill](2.5,0) circle (2.5pt);
\draw [fill](3,0) circle (2.5pt);
\draw [fill](3.5,0) circle (2.5pt);
\draw [fill](0,-5) circle (2.5pt);
\draw [fill](0.5,-5) circle (2.5pt);
\draw [fill](1,-5) circle (2.5pt);
\draw [fill](1.5,-5) circle (2.5pt);
\draw [fill](2,-5) circle (2.5pt);
\draw [fill](2.5,-5) circle (2.5pt);
\draw [fill](3,-5) circle (2.5pt);
\draw [fill](3.5,-5) circle (2.5pt);
\node [above] at (0,0.1) {$1$};
\node [above] at (0.5,0.1) {$2$};
\node [above] at (1,0.1) {$3$};
\node [above] at (1.5,0.1) {$4$};
\node [above] at (2,0.1) {$5$};
\node [above] at (2.5,0.1) {$6$};
\node [above] at (3,0.1) {$7$};
\node [above] at (3.5,0.1) {$8$};
\node[below] at (0,-5.1){$1$};
\node[below] at (0.5,-5.1){$2$};
\node[below] at (1,-5.1){$3$};
\node[below] at (1.5,-5.1){$4$};
\node[below] at (2,-5.1){$5$};
\node[below] at (2.5,-5.1){$6$};
\node[below] at (3,-5.1){$7$};
\node[below] at (3.5,-5.1){$8$};
\draw [line width = 2.5] (9.5,0) to [out = 270, in = 80] (6.7,-3) to [out = 260, in = 80] (7,-5);
\draw [line width = 2.5] (10.5,0) to [out = 270, in = 90]  (8,-5);
\draw [line width = 9,white] (10,0) to [out = 270, in = 80] (7.7,-3.5) to [out = 260,in = 90] (8.5,-5);
\draw [line width = 2.5] (10,0) to [out = 270, in = 80] (7.7,-3.5) to [out = 260,in = 90] (8.5,-5);
\draw [line width = 9,white] (9,0) to [out = 270, in = 90] (10.5,-2.5) to [out = 270,in=110]  (10.5,-5);
\draw [line width = 2.5] (9,0) to [out = 270, in = 90] (10.5,-2.5) to [out = 270,in=110]  (10.5,-5);
\draw [line width = 9,white] (8.5,0) to [out = 270, in = 90] (9.7,-3.5) to [out = 270,in=105]  (10,-5);
\draw [line width = 2.5] (8.5,0) to [out = 270, in = 90] (9.7,-3.5) to [out = 270,in=105]  (10,-5);
\draw [line width = 9,white] (8,0) to [out = 260,in=90]  (9.5,-5);
\draw [line width = 2.5] (8,0) to [out = 260,in=90]  (9.5,-5);
\draw [line width = 9,white] (7.5,0) to [out = 280, in = 90] (6.7,-1.8) to [out=270,in=80] (7.5,-5);
\draw [line width = 2.5] (7.5,0) to [out = 280, in = 90] (6.7,-1.8) to [out=270,in=80] (7.5,-5);
\draw [line width = 9,white] (7,0) to [out = 280, in = 130] (8.1,-3.3) to [out = 310, in = 90] (9,-5);
\draw [line width = 2.5] (7,0) to [out = 280, in = 130] (8.1,-3.3) to [out = 310, in = 90] (9,-5);
\draw [fill](7,0) circle (2.5pt);
\draw [fill](7.5,0) circle (2.5pt);
\draw [fill](8,0) circle (2.5pt);
\draw [fill](8.5,0) circle (2.5pt);
\draw [fill](9,0) circle (2.5pt);
\draw [fill](9.5,0) circle (2.5pt);
\draw [fill](10,0) circle (2.5pt);
\draw [fill](10.5,0) circle (2.5pt);
\draw [fill](7,-5) circle (2.5pt);
\draw [fill](7.5,-5) circle (2.5pt);
\draw [fill](8,-5) circle (2.5pt);
\draw [fill](8.5,-5) circle (2.5pt);
\draw [fill](9,-5) circle (2.5pt);
\draw [fill](9.5,-5) circle (2.5pt);
\draw [fill](10,-5) circle (2.5pt);
\draw [fill](10.5,-5) circle (2.5pt);
\node [above] at (7,0.1) {$1$};
\node [above] at (7.5,0.1) {$2$};
\node [above] at (8,0.1) {$3$};
\node [above] at (8.5,0.1) {$4$};
\node [above] at (9,0.1) {$5$};
\node [above] at (9.5,0.1) {$6$};
\node [above] at (10,0.1) {$7$};
\node [above] at (10.5,0.1) {$8$};
\node[below] at (7,-5.1){$1$};
\node[below] at (7.5,-5.1){$2$};
\node[below] at (8,-5.1){$3$};
\node[below] at (8.5,-5.1){$4$};
\node[below] at (9,-5.1){$5$};
\node[below] at (9.5,-5.1){$6$};
\node[below] at (10,-5.1){$7$};
\node[below] at (10.5,-5.1){$8$};
\end{tikzpicture}
\caption{Here are shown the braids $R_a$ (left side) and $R_b$ (right side) which correspond to the permutations $a$ and $b$, respectively.}\label{ex1alg}
\end{center}
\end{figure}
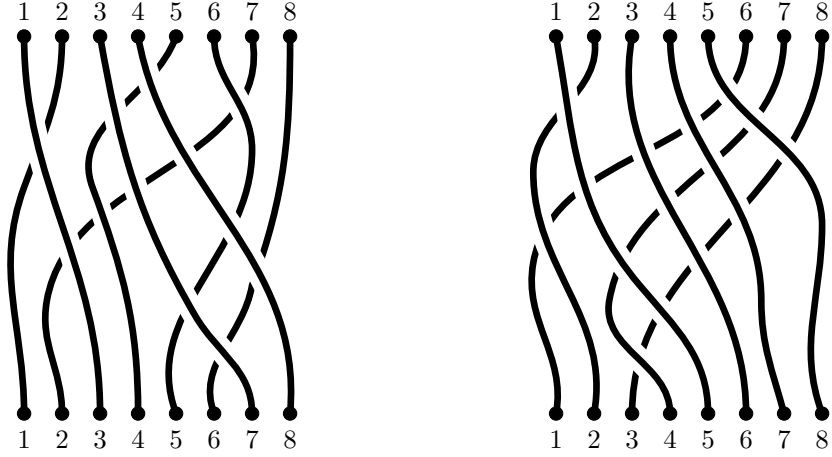
\smallskip

\begin{itemize}
 \item[{\bf Step 0.}] {\bf START}
 \item[{\bf Step 1.}] We get $a' = a^{-1} = \begin{pmatrix} 1&2&3&4&5&6&7&8\\2&7&1&5&6&8&3&4 \end{pmatrix}$, and $b' = b \omega$,
     $$b' = \begin{pmatrix} 1&2&3&4&5&6&7&8\\5&2&6&7&8&1&4&3 \end{pmatrix}\begin{pmatrix} 1&2&3&4&5&6&7&8\\8&7&6&5&4&3&2&1 \end{pmatrix} = \begin{pmatrix} 1&2&3&4&5&6&7&8\\4&7&3&2&1&8&5&6 \end{pmatrix},
     $$
 \item[{\bf Step 2.}] Using Definition \ref{conR}, we get
  \begin{align*}
   &aR_a = \{(1,3), (2,3), (2,4),(2,5),(2,7),(2,8),(4,7),(4,8), (5,7),(5,8),(6,7),(6,8)\},\\
   &\neg R_b = \{(1,3),(1,4),(1,5),(2,3),(2,4),(2,5),(2,7),(2,8),(3,4),(3,5),(4,5),(6,7),(6,8)\}.
   \end{align*}
  \item[{\bf Step 3.}] Let us find the intersection
   \[
    aR_a \cap \neg R_b = \{(1,3),(2,3),(2,4),(2,5),(2,7),(2,8),(6,7),(6,8)\},
   \]
    we see that this set satisfies to condition ii) of Lemma \ref{criteria}.
  \item[{\bf Step 4.}] Let $R_x = aR_a \wedge \neg R_b = \{(1,3),(2,3),(2,4),(2,5),(2,7),(2,8),(6,7),(6,8)\}$, we have for the permutation $x$ the following system of inequalities
      \[
       \begin{cases}
       x(1) < x(2), \, x(1) > x(3), \, x(1) < x(4), \, x(1) < x(5), \, x(1) < x(6), \, x(1) < x(7), \, x(1) < x(8),\\
       \phantom{x(2) < x(3), \, } x(2) > x(3), \, x(2) > x(4), \, x(2) > x(5), \, x(2) < x(6), \, x(2) > x(7), \, x(2) > x(8),\\
       \phantom{x(2) < x(3), \, x(2) < x(3), \, } x(3) < x(4), \, x(3) < x(5), \, x(3) < x(6), \, x(3) < x(7), \, x(3) < x(8),\\
       \phantom{x(2) < x(3), \, x(2) < x(3), \,  x(2) < x(3), \,}  x(4) < x(5), \, x(4) < x(6), \, x(4) < x(7), \, x(4) < x(8),\\
       \phantom{x(2) < x(3), \, x(2) < x(3), \,  x(2) < x(3), \, x(2) < x(3), \,}  x(5) < x(6), \, x(5) < x(7), \, x(5) < x(8),\\
       \phantom{x(2) < x(3), \, x(2) < x(3), \,  x(2) < x(3), \, x(2) < x(3), \, x(2) < x(3), \,}  x(6) > x(7), \, x(6) > x(8),\\
       \phantom{x(2) < x(3), \, x(2) < x(3), \,  x(2) < x(3), \, x(2) < x(3), \, x(2) < x(3), \, x(2) < x(3), \,}  x(7) < x(8),\\
       \end{cases}
      \]
  it follows that
   \[
   x = \begin{pmatrix} 1&2&3&4&5&6&7&8 \\ 2&7&1&3&4&8&5&6\end{pmatrix}, \qquad x^{-1} = \begin{pmatrix} 1&2&3&4&5&6&7&8 \\ 3&1&4&5&7&8&2&6\end{pmatrix}.
   \]
  \item[{\bf Step 5.}] We get
   \begin{align*}
    &a \asymp b = ax = \begin{pmatrix} 1&2&3&4&5&6&7&8\\3&1&7&8&4&5&2&6 \end{pmatrix} \begin{pmatrix} 1&2&3&4&5&6&7&8 \\ 2&7&1&3&4&8&5&6\end{pmatrix} = \begin{pmatrix} 1&2&3&4&5&6&7&8\\1&2&5&6&3&4&7&8 \end{pmatrix},\\
    & a \bowtie b = x^{-1}b = \begin{pmatrix} 1&2&3&4&5&6&7&8 \\ 3&1&4&5&7&8&2&6\end{pmatrix}\begin{pmatrix} 1&2&3&4&5&6&7&8\\5&2&6&7&8&1&4&3 \end{pmatrix} = \begin{pmatrix} 1&2&3&4&5&6&7&8\\6&5&7&8&4&3&2&1 \end{pmatrix},
   \end{align*}
   in the following picture (see fig. \ref{ww}) we show this procedure via the braid diagrams.
   \item[{\bf Step 6.}] {\bf THE END.}
 \end{itemize}
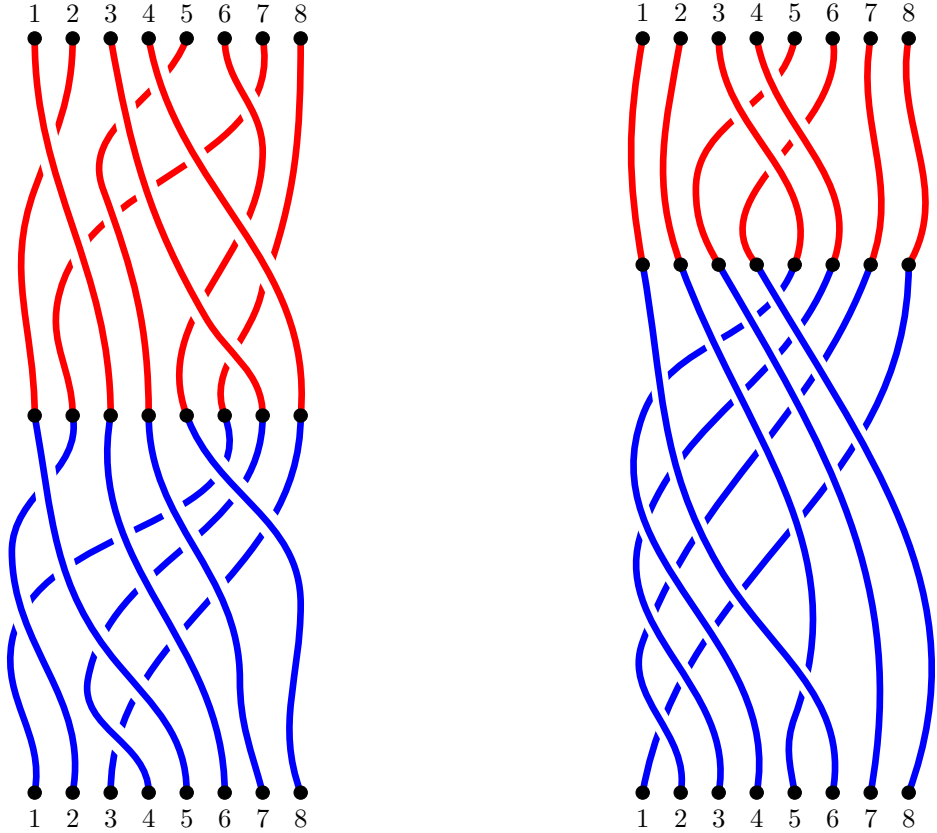
\begin{figure}[h!]
\begin{center}
\begin{tikzpicture}
\draw [line width = 2.5,red] (0.5,0) to [out = 270, in = 70] (0,-2) to [out = 250,in=90] (0,-5);
\draw [line width = 2.5,red] (3,0) to [out = 280, in = 80] (0.3,-3.5) to [out=260,in=90]   (0.5,-5);
\draw [line width = 2.5,red] (3.5,0) to [out = 270, in = 70] (3,-3.3) to [out = 260,in=110](2.5,-5);
\draw [line width = 9,white] (2.5,0) to [out = 270, in = 90] (3,-1.5) to [out = 270, in=110] (2,-5);
\draw [line width = 2.5,red] (2.5,0) to [out = 270, in = 90] (3,-1.5) to [out = 270, in=110] (2,-5);
\draw [line width = 8,white] (2,0) to [out = 250, in = 110] (0.9,-2) to [out =290, in =90] (1.5,-5);
\draw [line width = 2.5,red] (2,0) to [out = 250, in = 110] (0.9,-2) to [out =290, in =90] (1.5,-5);
\draw [line width = 9,white] (1.5,0) to [out = 280, in = 85] (3.5,-5);
\draw [line width = 2.5,red] (1.5,0) to [out = 280, in = 85] (3.5,-5);
\draw [line width = 9, white] (1,0) to [out = 280, in = 120] (2.25,-3.7) to [out = 300,in = 90](3,-5);
\draw [line width = 2.5,red] (1,0) to [out = 280, in = 120] (2.25,-3.7) to [out = 300,in = 90](3,-5);
\draw [line width = 9,white] (0,0) to [out = 270, in = 90] (1,-5);
\draw [line width = 2.5,red] (0,0) to [out = 270, in = 90] (1,-5);
\draw [fill](0,0) circle (2.5pt);
\draw [fill](0.5,0) circle (2.5pt);
\draw [fill](1,0) circle (2.5pt);
\draw [fill](1.5,0) circle (2.5pt);
\draw [fill](2,0) circle (2.5pt);
\draw [fill](2.5,0) circle (2.5pt);
\draw [fill](3,0) circle (2.5pt);
\draw [fill](3.5,0) circle (2.5pt);
\node [above] at (0,0.1) {$1$};
\node [above] at (0.5,0.1) {$2$};
\node [above] at (1,0.1) {$3$};
\node [above] at (1.5,0.1) {$4$};
\node [above] at (2,0.1) {$5$};
\node [above] at (2.5,0.1) {$6$};
\node [above] at (3,0.1) {$7$};
\node [above] at (3.5,0.1) {$8$};
\draw [line width = 2.5,blue] (2.5,-5) to [out = 290, in = 80] (-0.3,-8) to [out = 260, in = 80] (0,-10);
\draw [line width = 2.5,blue] (3.5,-5) to [out = 270, in = 90]  (1,-10);
\draw [line width = 9,white] (3,-5) to [out = 270, in = 80] (0.7,-8.5) to [out = 260,in = 90] (1.5,-10);
\draw [line width = 2.5,blue] (3,-5) to [out = 270, in = 80] (0.7,-8.5) to [out = 260,in = 90] (1.5,-10);
\draw [line width = 9,white] (2,-5) to [out = 290, in = 90] (3.5,-7.5) to [out = 270,in=110]  (3.5,-10);
\draw [line width = 2.5,blue] (2,-5) to [out = 290, in = 90] (3.5,-7.5) to [out = 270,in=110]  (3.5,-10);
\draw [line width = 9,white] (1.5,-5) to [out = 270, in = 90] (2.7,-8.5) to [out = 270,in=105]  (3,-10);
\draw [line width = 2.5,blue] (1.5,-5) to [out = 270, in = 90] (2.7,-8.5) to [out = 270,in=105]  (3,-10);
\draw [line width = 9,white] (1,-5) to [out = 260,in=90]  (2.5,-10);
\draw [line width = 2.5,blue] (1,-5) to [out = 260,in=90]  (2.5,-10);
\draw [line width = 9,white] (0.5,-5) to [out = 280, in = 90] (-0.3,-6.8) to [out=270,in=80] (0.5,-10);
\draw [line width = 2.5,blue] (0.5,-5) to [out = 280, in = 90] (-0.3,-6.8) to [out=270,in=80] (0.5,-10);
\draw [line width = 9,white] (0,-5) to [out = 280, in = 130] (1.1,-8.3) to [out = 310, in = 90] (2,-10);
\draw [line width = 2.5,blue] (0,-5) to [out = 280, in = 130] (1.1,-8.3) to [out = 310, in = 90] (2,-10);
\draw [fill](0,-5) circle (2.5pt);
\draw [fill](0.5,-5) circle (2.5pt);
\draw [fill](1,-5) circle (2.5pt);
\draw [fill](1.5,-5) circle (2.5pt);
\draw [fill](2,-5) circle (2.5pt);
\draw [fill](2.5,-5) circle (2.5pt);
\draw [fill](3,-5) circle (2.5pt);
\draw [fill](3.5,-5) circle (2.5pt);
\draw [fill](0,-10) circle (2.5pt);
\draw [fill](0.5,-10) circle (2.5pt);
\draw [fill](1,-10) circle (2.5pt);
\draw [fill](1.5,-10) circle (2.5pt);
\draw [fill](2,-10) circle (2.5pt);
\draw [fill](2.5,-10) circle (2.5pt);
\draw [fill](3,-10) circle (2.5pt);
\draw [fill](3.5,-10) circle (2.5pt);
\node[below] at (0,-10.1){$1$};
\node[below] at (0.5,-10.1){$2$};
\node[below] at (1,-10.1){$3$};
\node[below] at (1.5,-10.1){$4$};
\node[below] at (2,-10.1){$5$};
\node[below] at (2.5,-10.1){$6$};
\node[below] at (3,-10.1){$7$};
\node[below] at (3.5,-10.1){$8$};
\draw [line width = 2.5,red] (8,0) to [out = 260,in=100]  (8,-3);
\draw [line width = 2.5,red] (8.5,0) to [out = 260,in=110]  (8.5,-3);
\draw [line width = 2.5,red] (10,0) to [out = 260,in=90]  (8.7,-2) to [out =270, in =120] (9,-3);
\draw [line width = 2.5,red] (10.5,0) to [out = 280,in=130] (9.5,-3);
\draw [line width = 9,white] (9,0) to [out = 270,in=70] (10,-3);
\draw [line width = 2.5,red] (9,0) to [out = 270,in=70] (10,-3);
\draw [line width = 9,white] (9.5,0) to [out = 280,in=70] (10.5,-3);
\draw [line width = 2.5,red] (9.5,0) to [out = 280,in=70] (10.5,-3);
\draw [line width = 2.5,red] (11,0) to [out = 260,in=70] (11,-3);
\draw [line width = 2.5,red] (11.5,0) to [out = 260,in=60] (11.5,-3);
\draw [line width = 2.5,blue] (11.5,-3) to [out = 270,in=80] (8,-10);
\draw [line width = 7,white] (11,-3) to [out = 250,in=70] (8,-8) to [out =250,in= 80] (8.5,-10);
\draw [line width = 2.5,blue] (11,-3) to [out = 250,in=70] (8,-8) to [out =250,in= 80] (8.5,-10);
\draw [line width = 7,white] (10.5,-3) to [out = 250,in=70] (8,-6.5) to [out =250,in= 80] (9,-10);
\draw [line width = 2.5,blue] (10.5,-3) to [out = 250,in=70] (8,-6.5) to [out =250,in= 80] (9,-10);
\draw [line width = 7,white] (10,-3) to [out = 250,in=70] (8,-5) to [out =250,in= 80] (9.5,-10);
\draw [line width = 2.5,blue] (10,-3) to [out = 250,in=70] (8,-5) to [out =250,in= 80] (9.5,-10);
\draw [line width = 7,white] (9.5,-3) to [out = 300,in=70] (11.5,-10);
\draw [line width = 2.5,blue] (9.5,-3) to [out = 300,in=70] (11.5,-10);
\draw [line width = 7,white] (9,-3) to [out = 300,in=80] (11,-10);
\draw [line width = 2.5,blue] (9,-3) to [out = 300,in=80] (11,-10);
\draw [line width = 7,white] (8.5,-3) to [out = 290,in=70] (10,-9) to [out = 250,in=100] (10,-10);
\draw [line width = 2.5,blue] (8.5,-3) to [out = 290,in=70] (10,-9) to [out = 250,in=100] (10,-10);
\draw [line width = 7,white] (8,-3) to [out = 280,in=120] (9,-7) to[out = 300,in = 80] (10.5,-10);
\draw [line width = 2.5,blue] (8,-3) to [out = 280,in=120] (9,-7) to[out = 300,in = 80] (10.5,-10);
\draw [fill](8,0) circle (2.5pt);
\draw [fill](8.5,0) circle (2.5pt);
\draw [fill](9,0) circle (2.5pt);
\draw [fill](9.5,0) circle (2.5pt);
\draw [fill](10,0) circle (2.5pt);
\draw [fill](10.5,0) circle (2.5pt);
\draw [fill](11,0) circle (2.5pt);
\draw [fill](11.5,0) circle (2.5pt);
\draw [fill](8,-3) circle (2.5pt);
\draw [fill](8.5,-3) circle (2.5pt);
\draw [fill](9,-3) circle (2.5pt);
\draw [fill](9.5,-3) circle (2.5pt);
\draw [fill](10,-3) circle (2.5pt);
\draw [fill](10.5,-3) circle (2.5pt);
\draw [fill](11,-3) circle (2.5pt);
\draw [fill](11.5,-3) circle (2.5pt);
\draw [fill](8,-10) circle (2.5pt);
\draw [fill](8.5,-10) circle (2.5pt);
\draw [fill](9,-10) circle (2.5pt);
\draw [fill](9.5,-10) circle (2.5pt);
\draw [fill](10,-10) circle (2.5pt);
\draw [fill](10.5,-10) circle (2.5pt);
\draw [fill](11,-10) circle (2.5pt);
\draw [fill](11.5,-10) circle (2.5pt);
\node [above] at (8,0.1) {$1$};
\node [above] at (8.5,0.1) {$2$};
\node [above] at (9,0.1) {$3$};
\node [above] at (9.5,0.1) {$4$};
\node [above] at (10,0.1) {$5$};
\node [above] at (10.5,0.1) {$6$};
\node [above] at (11,0.1) {$7$};
\node [above] at (11.5,0.1) {$8$};
\node[below] at (8,-10.1){$1$};
\node[below] at (8.5,-10.1){$2$};
\node[below] at (9,-10.1){$3$};
\node[below] at (9.5,-10.1){$4$};
\node[below] at (10,-10.1){$5$};
\node[below] at (10.5,-10.1){$6$};
\node[below] at (11,-10.1){$7$};
\node[below] at (11.5,-10.1){$8$};
\end{tikzpicture}
\end{center}
\caption{Here are shown the rewriting of the word $R_aR_b$ to the greedy normal form.}\label{ww}
\end{figure}
\end{example}

\section*{Conclusions and Thanks}
We have seen that Thurston's point of view on the braids is very useful and very easy for understanding. The author hopes that the results of this paper will be helpful for studying the conjugacy problem and for calculating (in an explicit form as it was done by V. I. Arnold) the cohomologies of the braid groups. All these questions are interesting, and the author is going to study these problems in the future papers.

\smallskip

\paragraph{Acknowledgements.} The author would like to express his deepest gratitude to Professor Leonid A. Bokut', who has drawn the author's attention to the braid groups. I am also extremely indebted to my friend (my Chinese Brother) Zhang Junhuai for the great support, without which the author's life would be very difficult.

\newpage

\end{document}